\renewcommand{\appendix}[1]{
    \addtocounter{section}{1}
    \setcounter{equation}{0}
    \renewcommand{\thesection}{\Alph{section}}
    \section*{Appendix \thesection\protect\indent #1}
    \addcontentsline{toc}{section}{Appendix \thesection\ \ \ #1}
}
\newcommand\encadremath[1]{\vbox{\hrule\hbox{\vrule\kern8pt
\vbox{\kern8pt \hbox{$\displaystyle #1$}\kern8pt}
\kern8pt\vrule}\hrule}}
\def\enca#1{\vbox{\hrule\hbox{
\vrule\kern8pt\vbox{\kern8pt \hbox{$\displaystyle #1$}
\kern8pt} \kern8pt\vrule}\hrule}}
\newcommand\figureframex[3]{
\begin{figure}[bth]
\hrule\hbox{\vrule\kern8pt
\vbox{\kern8pt \vbox{
\begin{center}
{\mbox{\epsfxsize=#1.truecm\epsfbox{#2}}}
\end{center}
\caption{#3}
}\kern8pt}
\kern8pt\vrule}\hrule
\end{figure}
}
\newcommand\figureframey[3]{
\begin{figure}[bth]
\hrule\hbox{\vrule\kern8pt
\vbox{\kern8pt \vbox{
\begin{center}
{\mbox{\epsfysize=#1.truecm\epsfbox{#2}}}
\end{center}
\caption{#3}
}\kern8pt}
\kern8pt\vrule}\hrule
\end{figure}
}
\newcommand{\beq}{\begin{equation}}
\newcommand{\eeq}{\end{equation}}
\newcommand{\bea}{\begin{eqnarray}}
\newcommand{\eea}{\end{eqnarray}}
\newcommand{\Res}{\mathop{\,\rm Res\,}}
\renewcommand{\l}{\lambda}
\renewcommand{\d}{{{\partial}}}
\newcommand{\D}{{{\hbox{d}}}}
\newcommand{\Pint}{{\int\kern -1.em -\kern-.25em}}
\renewcommand{\a}{o}
\renewcommand{\l}{\lambda}
\renewcommand{\L}{\Lambda}
\renewcommand{\l}{\lambda}
\renewcommand{\L}{\Lambda}
\renewcommand{\thesection}{\arabic{section}}
\newcommand{\bn}{\mathfrak{b}}
\newcommand{\lx}{\tilde{x}}
\newcommand{\ly}{\tilde{y}}
\newcommand{\Do}{\mathcal{D}}
\newcommand{\si}{\sigma}
\newcommand{\Si}{\tilde{\sigma}}
\newcommand{\xv}{\rho}
\newcommand{\oM}{\overline{\mathcal{M}}}
\newcommand{\cH}{H^{\circ}}
\newcommand{\ch}{h^{\circ}}
\newcommand{\dih}{h}
\newcommand{\A}{\mathcal{A}}
\newcommand{\Z}{\mathbb{Z}}
\newcommand{\cE}{\mathcal{E}}
\newcommand{\br}[1]{\left( #1 \right) }
\newcommand{\bs}[1]{\left\{ #1 \right\} }
\newcommand{\ccor}[1]{\left\langle #1 \right\rangle^\circ }
\newcommand{\cor}[1]{\left\langle #1 \right\rangle }
\newcommand{\cV}{\mathcal{V}}
\newcommand{\of}{\bar f}
\newtheorem{theorem}{Theorem}[section]
\newtheorem{proposition}[theorem]{Proposition}
\newtheorem{lemma}[theorem]{Lemma}
\theoremstyle{definition}
\newtheorem{definition}[theorem]{Definition}
\newtheorem{notation}[theorem]{Notation}
\theoremstyle{remark}
\newtheorem{remark}[theorem]{Remark}
\newcommand{\1}{1\hskip-4pt1}
\newcommand{\C}{\mathbb C}
\newcommand{\End}{\mathop{\rm End}}
\newcommand{\rmtop}{\mathrm{top}}
\renewcommand{\L}{\Lambda}
\newcommand{\reg}{\rm reg}
\newcommand{\pd}[2]{\dfrac{\partial #1}{\partial #2}}
\renewcommand{\b}{\beta}
\renewcommand{\a}{\alpha}
\newcommand{\res}{\mathop{\rm res}}
\renewcommand{\l}{\lambda}
\newcommand{\s}{\sigma}
\newcommand{\cA}{\mathcal A}
\newcommand{\QED}{\ifmmode\eqno\square\else}
\newcommand{\odd}[1]{\left\lfloor #1 \right\rfloor^{-}_1}
\newcommand{\tW}{\widetilde W}
\title[Polynomiality of Hurwitz numbers and the ELSV formula]{Polynomiality of Hurwitz numbers, Bouchard-Mari\~no conjecture, and a new proof of the ELSV formula}
\author{P.~Dunin-Barkowski}
\author{M.~Kazarian}
\author{N.~Orantin}
\author{S.~Shadrin}
\author{L.~Spitz}
\address{P.~D.-B.: Korteweg-de~Vries Institute for Mathematics, University of Amsterdam, P.~O.~Box 94248, 1090 GE Amsterdam, The Netherlands and ITEP, Moscow, Russia}
\email{P.Dunin-Barkovskiy@uva.nl}
\address{M.~K.: Steklov Mathematical Institute, Ul.~Gubkina 8, 119991 Moscow, Russia and Department of Mathematics, NRU-HSE, Moscow, Russia}
\email{kazarian@mccme.ru}
\address{N.~O.: CAMGSD, Departamento de Matem\'atica,
Instituto Superior T\'ecnico,
Av. Rovisco Pais,
1049-001 Lisboa, Portugal}
\email{norantin@math.ist.utl.pt}
\address{S.~S.: Korteweg-de~Vries Institute for Mathematics, University of Amsterdam, P.~O.~Box 94248, 1090 GE Amsterdam, The Netherlands}
\email{S.Shadrin@uva.nl}
\address{L.~S.: Korteweg-de~Vries Institute for Mathematics, University of Amsterdam, P.~O.~Box 94248, 1090 GE Amsterdam, The Netherlands}
\email{L.Spitz@uva.nl}
\begin{document}
\begin{abstract}
In this paper we give a new proof of the ELSV formula. First, we refine an argument of Okounkov and Pandharipande in order to prove (quasi-)polynomiality of Hurwitz numbers without using the ELSV formula (the only way to do that before used the ELSV formula). Then, using this polynomiality we give a new proof of the Bouchard-Mari\~no conjecture. After that, using the correspondence between the Givental group action and the topological recursion coming from matrix models, we prove the equivalence of the Bouchard-Mari\~no conjecture and the ELSV formula (it is a refinement of an argument by Eynard).
\end{abstract}

\maketitle

\tableofcontents

\section{Introduction}

Hurwitz numbers~$h^\circ_{g,\mu}$ enumerate ramified coverings of the 2-sphere by a connected genus $g$ surface, where the ramification profile over infinity is given by the partition $\mu=(\mu_1,\dots,\mu_\ell)$, there are simple ramifications over $\bn(g,n) = 2g - 2 + l(\mu) + |\mu|$ fixed points, and there are no further ramifications. 

Hurwitz numbers play an important role in the interaction of combinatorics, representation theory of symmetric groups, integrable systems, tropical geometry, matrix models, and intersection theory of the moduli spaces of curves. In this paper we revisit two of the most remarkable properties of Hurwitz numbers. 

The ELSV formula~\cite{ELSV} gives an expression for connected Hurwitz numbers in terms of intersection numbers on the moduli space of curves:
\begin{equation}
\ch_{g,\mu} = \bn(g,n)!
\prod_{i=1}^{\ell(\mu)} \frac{\mu_i^{\mu_i}}{\mu_i !} 
\int_{\oM_{g,\ell(\mu)}}
\frac{\Lambda_g^{\vee}(1)}{\prod_{i=1}^{\ell(\mu)} (1-\mu_i\psi_i)}.
\end{equation}

The Bouchard-Mari\~no conjecture~\cite{BM} (proved by now in several different papers)
is also a relation of Hurwitz numbers to matrix models. Consider the spectral curve
\begin{equation}
x=ye^{-y}
\end{equation}
equipped with the two-point function
\begin{equation}
\frac{dy dy'}{(y-y')^2}.
\end{equation}
Then the $n$-point functions $w_{g,n}$ produced from this data via the matrix model topological recursion~\cite{EO} are equal to 
\begin{equation}
\sum_{\mu_1,\dots,\mu_n} \dfrac{\ch_{g;\mu_1,\dots,\mu_n}}{\bn(g,\mu)!}
     \;\mu_1\dots\mu_n\,x_1^{\mu_1-1}\dots x_n^{\mu_n-1}dx_1\dots dx_n.
\end{equation}

These two statements are known to be equivalent~\cite{Eyn11}, see also~\cite{SSZ13}. We revisit this equivalence in this paper and present this argument in a new way.

Let us describe the existing proofs of both statements. All proofs of the ELSV formula~\cite{ELSV,GV,OP01,Liu} are based, either directly or, as the original one, indirectly, on the computation of the Euler class of the fixed locus of the $\C^*$-action on the space of (relative stable) maps to $\C\mathrm{P}^1$. All mathematically rigorous proofs of the Bouchard-Mari\~no conjecture~\cite{EMS09,MZ} use the ELSV formula and the Laplace transform of the so-called cut-and-join equation for Hurwitz numbers, the basic equation that also allows to reconstruct them recursively. There is one more proof of the Bouchard-Mari\~no conjecture in~\cite{BEMS} that goes through the construction of a matrix model for Hurwitz numbers and a direct derivation of the topological recursion, but it will require plenty of subtle analytic work to make it really mathematically rigorous.  Of course, since the ELSV formula is proved independently, the fact~\cite{Eyn11,SSZ13} that the two statements are equivalent implies the Bouchard-Mari\~no conjecture as well. 

There is still a number of interesting questions on both statements. The first question is whether it is possible to prove the Bouchard-Mari\~no conjecture independently of the ELSV formula. The second question is whether there exists any way to derive the ELSV formula combinatorially, rather than via the computation of the Euler class mentioned above. For example, all Hurwitz numbers can be computed combinatorially, either using the character formula, or, equivalently, using the semi-infinite wedge formalism, or recursively via the cut-and-join equation. On the other hand, the intersection number in the ELSV formula can also be computed combinatorially. Indeed, we can use the Mumford formula~\cite{Mu} for the Chern characters of the Hodge bundle in order to reduce the intersection number in the ELSV formula to intersection numbers of $\psi$-classes, and any intersection number of $\psi$-classes can be computed using the Witten-Kontsevich theorem~\cite{W,K}.  
 The third question, posed e.g.~in~\cite{VakGQT,GJV99}, is the following. The structure of the ELSV formula implies some polynomiality property of Hurwitz numbers, that is
 \begin{equation*}
\ch_{g;\mu_1,\dots,\mu_n} =  \bn(g,n)!\br{\prod_{i=1}^n\dfrac{\mu_i^{\mu_i}}{\mu_i!}}\, P_{g,n}(\mu_1,\dots,\mu_n),
\end{equation*}
where $P_{g,n}(\mu_1,\dots,\mu_n)$ are some polynomials in $\mu_1,\dots,\mu_n$. Though this fact is completely combinatorial, the only way to prove it known up to now is to use the ELSV formula. So, the third question we consider here is whether it is possible to prove this polynomiality in some direct way, without any usage of the ELSV formula. 

This paper provides full answer to all three questions. It is organized in the following way. First, we prove in Section~\ref{sec:polynomiality} the polynomiality of Hurwitz numbers directly from the definition in terms of the semi-infinite wedge formalism. Our argument is a refinement of an argument by Okounkov and Pandharipande in~\cite{OP02}. Then, using the polynomiality property of Hurwitz numbers we are able to derive in Section~\ref{secBM} the Bouchard-Mari\~no conjecture directly from the
 cut-and-join equation. Then, since we have an equivalence of the Bouchard-Mari\~no conjecture and the ELSV formula, we immediately derive the ELSV formula in a new way. In Section~\ref{sec:CEO-Givental} we review the correspondence between the topological recursion and the Givental theory, with a special focus on the 1-dimensional case, and in Section~\ref{sec:BM-ELSV} we provide a (slightly refined) proof of the equivalence of the ELSV formula and the Bouchard-Mari\~no conjecture.
 
\subsection{Acknowledgments} We would like to thank G.~Borot, V.~Bouchard, L.~Che\-khov, B.~Eynard, M.~Mulase, and D.~Zvonkine for many very helpful discussions. N.~O. would like to thank the Korteweg-de Vries Institute for its hospitality during different stages of this project.

P.~D.-B. was supported by free competition grant 613.001.021 of the Netherlands Organization for Scientific Research and partially supported by Ministry of Education and Science of the Russian Federation under contract 8498, by grant NSh-3349.2012.2 and by RFBR grants 13-01-00525 and 13-02-91371-St-a. M.Kazarian was partially supported by RFBR grant 13-01-00383. S.~S. and L.~S. were supported by a Vidi grant of the Netherlands Organization for Scientific Research.



\section{Polynomiality of the Hurwitz numbers} \label{sec:polynomiality}

In this section we prove the following theorem:
\begin{theorem}
\label{ThPol}
The Hurwitz numbers $\ch_{g;\mu_1,\dots,\mu_n}$ for $(g,n)\notin \bs{(0,1),(0,2)}$ can be expressed as follows:
\begin{equation}
\ch_{g;\mu_1,\dots,\mu_n} = (2g+|\mu|+n-2)!\br{\prod_{i=1}^n\dfrac{\mu_i^{\mu_i}}{\mu_i!}}\, P_{g,n}(\mu_1,\dots,\mu_n),
\end{equation}
where $P_{g,n}(\mu_1,\dots,\mu_n)$ is some polynomial in $\mu_1,\dots,\mu_n$.
\end{theorem}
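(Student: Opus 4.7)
The plan is to derive the polynomiality of $\ch_{g;\mu_1,\dots,\mu_n}$ directly in the semi-infinite wedge formalism, refining the computation of Okounkov and Pandharipande.

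First, I would translate the Burnside--Frobenius formula for simple Hurwitz numbers into a connected vacuum expectation value on the bosonic Fock space, of the form
\begin{equation}
\ch_{g;\mu_1,\dots,\mu_n} \;=\; \bn(g,n)!\; \ccor{\, \dfrac{\alpha_1^{|\mu|}}{|\mu|!}\; \mathcal{F}_2^{\,\bn(g,n)} \prod_{i=1}^n \dfrac{\alpha_{-\mu_i}}{\mu_i} \,},
\end{equation}
where $\mathcal{F}_2$ is the cut-and-join (completed $2$-cycle) operator and the $\alpha_k$ are standard bosonic modes satisfying $[\alpha_k,\alpha_{-k}]=k$. The factor $\bn(g,n)!$ records the ordering of the simple ramification points, and only the connected part of the correlator is retained.

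Next, I would commute all $\bn(g,n)$ copies of $\mathcal{F}_2$ to the right, using that $\mathcal{F}_2$ acts on a partition $\lambda$ by multiplication by the second Casimir $\sum_i\bigl[(\lambda_i-i+\tfrac{1}{2})^2-(-i+\tfrac{1}{2})^2\bigr]$. This reduces the expectation value to a finite sum indexed by connected ``Feynman diagrams'' with $\bn(g,n)$ cut-and-join vertices and $n$ external legs labelled by $\mu_1,\dots,\mu_n$. A key observation is that the Euler characteristic constraint for a connected diagram of prescribed genus $g$ with $n$ external legs forces the number of topologically distinct such diagrams to be bounded by a constant depending only on $(g,n)$, independently of $|\mu|$.

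The central step, which is the refinement over the original Okounkov--Pandharipande argument, is to extract the factor $\prod_{i=1}^n \mu_i^{\mu_i}/\mu_i!$. This factor emerges from the tree-like Wick contractions that distribute the $|\mu|$ quanta produced by $\alpha_1^{|\mu|}$ among the external legs $\alpha_{-\mu_i}$: by a Cayley-type count, the contribution from each cluster is proportional to $\mu_i^{\mu_i-1}$ rooted trees, which, combined with the $1/\mu_i$ and $1/|\mu|!$ normalisations, produces exactly the asserted factor. After extracting it, each diagram contributes a rational expression in $\mu_1,\dots,\mu_n$, and summing over diagrams yields the polynomial $P_{g,n}(\mu_1,\dots,\mu_n)$.

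The main obstacle will be to carry out this factorisation uniformly across all contributing diagrams and to verify that the extraneous denominators cancel in the sum. Specifically, one must check that the tree-like part can be cleanly peeled off without residual non-polynomial dependence on the $\mu_i$, that the remaining diagram sum is a genuine polynomial rather than a rational function with apparent poles at $\mu_i=0$, and that the exponential formula relating disconnected to connected correlators respects this factorisation. This combinatorial bookkeeping, rather than any new deep input, is where the technical difficulty lies.
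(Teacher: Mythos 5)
Your starting point is the right one---the Okounkov--Pandharipande vacuum expectation $\dih_{g,\mu}=\cor{e^{\alpha_1}\mathcal{F}_2^{\bn(g,\mu)}\prod_i\alpha_{-\mu_i}/\mu_i}$ is exactly where the paper begins---but the proposal stops short of the actual proof at both of its critical junctures. First, the claim that the Euler characteristic constraint bounds the number of contributing diagrams by a constant depending only on $(g,n)$ is not correct as stated: the number of cut-and-join insertions is $\bn(g,\mu)=2g-2+n+|\mu|$, which grows with $|\mu|$, so the connected diagrams have an unbounded number of vertices; only after contracting the tree-like parts could one hope for a bounded ``core,'' and making that contraction precise is the whole difficulty. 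Second, the extraction of $\prod_i\mu_i^{\mu_i}/\mu_i!$ and the proof that what remains is polynomial are precisely the two steps you defer to ``combinatorial bookkeeping,'' whereas they are the entire content of the theorem. The paper performs no diagrammatic resummation: the factor $m^m/m!$ is produced in one stroke by the conjugation identity $e^{\alpha_1}e^{u\mathcal{F}_2}\alpha_{-m}e^{-u\mathcal{F}_2}e^{-\alpha_1}=\frac{u^m m^m}{m!}\,\A(m,um)$ (Lemma 2 of Okounkov--Pandharipande), which converts the correlator into $\ccor{\A(\mu_1,u\mu_1)\dots\A(\mu_n,u\mu_n)}$.

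Polynomiality of the latter in $\mu_1,\dots,\mu_n$ is then established by three ingredients that have no counterpart in your sketch: (i) the commutation relation $[\A_k,\A_l]=(-1)^l\delta_{k+l-1}$ for the coefficients of $\A(z,uz)=\sum_k\A_k z^k$, which lets one replace each $\A$ by its positive part $\A_+$ at the cost of explicitly computable unstable one- and two-point corrections; (ii) an upper bound on the degree in each $z_i$ of $\cor{\A_+(z_1,uz_1)\dots\A_+(z_n,uz_n)}_k$ for fixed $k$, obtained from the rescaling $\cE_k\mapsto u^{-k}\cE_k$ and the explicit action of $\A$ on the vacuum; and (iii) an induction on $n$ transferring polynomiality from the $\A_+$-correlators to the connected correlators. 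Without (ii) there is no mechanism in your argument forcing the degree in $\mu_i$ to be bounded, i.e.\ forcing $P_{g,n}$ to be a polynomial rather than a power series; and without (i) you cannot control the unstable terms responsible for the exclusion of $(g,n)\in\bs{(0,1),(0,2)}$. If you wish to pursue the diagrammatic route, you would essentially have to re-derive these facts in graphical language, so the proposal as it stands has a genuine gap rather than a complete alternative argument.
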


Basically this theorem gives the form of the ELSV formula without specifying the precise formulas for the coefficients. This property (in a bit stronger form) was conjectured in~\cite{GJ99} and then proved in~\cite{GJV99}, with the help of the ELSV formula. Still, the question whether this property can be derived without using the ELSV formula remained open~\cite{VakGQT}. This is precisely what we do here: we prove this statement without using the ELSV formula.

\subsection{Infinite wedge}
In this subsection we recall some basic facts from the theory of the  semi-infinite wedge space following \cite{OP02,OkoPan06,Joh10}.

Let $V$ be an infinite dimensional vector space with a basis labeled by the half integers. Denote the basis vector labeled by $m/2$ by $\underline{m/2}$, so $V = \bigoplus_{i \in \Z + \frac{1}{2}} \underline{i}$.

\begin{definition}
The semi-infinite wedge space $\cV$ is the span of all wedge products of the form
\begin{equation}\label{wedgeProduct}
\underline{i_1} \wedge \underline{i_2} \wedge \cdots
\end{equation}
for any decreasing sequence of half integers $(i_k)$ such that there is an integer $c$ (called the charge) with $i_k + k - \frac{1}{2} = c$ for $k$ sufficiently large. We denote the inner product associated with this basis by~$(\cdot,\cdot)$.

 In this paper, we are mostly concerned with the zero charge subspace $\cV_0\subset\cV$ of the semi-infinite wedge space, which is the space of all wedge products of the form~\eqref{wedgeProduct} such that 
\begin{equation}\label{zeroCharge}
i_k + k = \frac{1}{2}
\end{equation}
for $k$ sufficiently large.
\end{definition}

\begin{remark}
An element of $\cV_0$ is of the form
$$
\underline{\lambda_1 - \frac{1}{2}} \wedge \underline{\lambda_2 - \frac{3}{2}} \wedge \cdots
$$
for some integer partition~$\lambda$. This follows immediately from condition~\eqref{zeroCharge}. Thus, we canonically have a basis for $\cV_0$ labeled by all integer partitions. 
\end{remark}

\begin{notation}
We denote by~$v_\lambda$ the vector labeled by a partition~$\lambda$. The vector labeled by the empty partition is called the vacuum vector and denoted by $|0\rangle = v_{\emptyset} = \underline{-\frac{1}{2}} \wedge \underline{-\frac{3}{2}} \wedge \cdots$. 
\end{notation}

\begin{definition}
If $\mathcal{P}$ is an operator on $\cV_0$, then we define the \emph{vacuum expectation value} of~$\mathcal{P}$ by
$\cor{\mathcal{P}} := \langle 0 |\mathcal{P}|0\rangle$,
where $\langle 0 |$ is the dual of the vacuum vector with respect to the inner product~$(\cdot,\cdot)$, and called the covacuum vector. We will also refer to these vacuum expectation values as (disconnected) \emph{correlators}.
\end{definition}

We now define some operators on the infinite wedge space.

\begin{definition} Let $k$ be any half integer. Then the operator $\psi_k\colon \cV\to\cV$ is defined by
$\psi_k \colon (\underline{i_1} \wedge \underline{i_2} \wedge \cdots) \ \mapsto \ (\underline{k} \wedge \underline{i_1} \wedge \underline{i_2} \wedge \cdots)$. It increases the charge by $1$.

The operator $\psi_k^*$ is defined to be the adjoint of the operator $\psi_k$ with respect to the inner product~$(\cdot,\cdot)$.
\end{definition}

\begin{definition}
The normally ordered products of $\psi$-operators are defined in the following way
\begin{equation}
E_{ij}:={:}\psi_i \psi_j^*{:}\ := \begin{cases}\psi_i \psi_j^*, & \text{ if } j > 0 \\
-\psi_j^* \psi_i & \text{ if } j < 0\ .\end{cases} 
\end{equation}
This operator does not change the charge and can be restricted to $\cV_0$. Its action on the basis vectors $v_\lambda$ can be described as follows: ${:}\psi_i \psi_j^*{:}$ checks if $v_\lambda$ contains $\underline{j}$ as a wedge factor and if so replaces it by $\underline{i}$. Otherwise it yields~$0$. In the case $i=j > 0$, we have ${:}\psi_i \psi_j^*{:}(v_\lambda) = v_\lambda$ if $v_\lambda$ contains $\underline{j}$ and $0$ if it does not; in the case  $i=j < 0$, we have ${:}\psi_i \psi_j^*{:}(v_\lambda) = - v_\lambda$ if $v_\lambda$ does not contain $\underline{j}$ and $0$ if it does. These are the only two cases where the normal ordering is important.
\end{definition}

\begin{notation}\label{notationZeta}
We denote by $\zeta(z)$ the function $e^{z/2} - e^{-z/2}$.
\end{notation}

\begin{definition}
Let $n \in \Z$ be any integer. We define an operator $\mathcal{E}_n(z)$ depending on a formal variable~$z$ by
\begin{align*}
\mathcal{E}_n(z) &= \sum_{k \in \Z + \frac{1}{2}} e^{z(k - \frac{n}{2})} E_{k-n,k} + \frac{\delta_{n,0}}{\zeta(z)}  .
\end{align*}
\end{definition}

Note that 
\begin{equation}\label{commcEE}
  \left[\cE_a(z),\cE_b(w)\right] =
\zeta\left(\det  \left[
\begin{smallmatrix}
  a & z \\
b & w
\end{smallmatrix}\right]\right)
\,
\cE_{a+b}(z+w)
\end{equation}
and
\begin{equation}
\cE_0(z)\big|0\big\rangle = \dfrac{1}{\zeta(z)}\big|0\big\rangle
\end{equation}
and also that
\begin{equation}
\cE_k(z)\big|0\big\rangle = 0,\quad k>0 .
\end{equation}

\begin{definition}
In what follows we will use the following operator:
$${\mathcal F}_2 := \sum_{k\in\Z+\frac12} \frac{k^2}{2} E_{k,k} .$$
\end{definition}

\begin{definition}
We will also need the following operators:
$$\alpha_k:=\mathcal{E}_k(0), \quad k\neq 0 .$$
\end{definition}



\subsection{Hurwitz numbers in the infinite wedge formalism}

By  $\dih_{g,\mu}$ we denote the Hurwitz numbers for possibly disconnected covering surfaces. 
The character formula for the disconnected Hurwitz numbers $\dih_{g,\mu}$ implies that (see e.g. \cite{OP02}) 
\begin{equation}
\label{jkk}
\dih_{g,\mu} =
\cor{e^{\alpha_1} \mathcal{F}_2^{\bn(g,\mu)} \prod_{i=1}^{l(\mu)} \dfrac{\alpha_{-\mu_i}}{\mu_i}} .
\end{equation}
Here $l(\mu)$ denotes the number of parts of $\mu$, and 
\begin{equation}
\label{bdef}
\bn(g,\mu) := 2g+|\mu|+l(\mu)-2 .
\end{equation} 
Note the difference between our disconnected Hurwitz numbers $\dih_{g,\mu}$ and the ones in \cite{OP02} (which are denoted by $\mathsf{C}_g(\mu)$ there). The difference is in a factor of $\left|\mathrm{Aut}(\mu)\right|$, the number of automorphisms of the partition.

\begin{definition}
Define the genus-generating functions for the disconnected Hurwitz numbers and for the connected ones as well:
\begin{equation}
\dih_{\mu}(u) := \sum_{g=0}^{\infty} \dfrac{u^{2g-2}}{\bn(g,\mu)!}\dih_{g,\mu} ,
\end{equation}
\begin{equation}
\ch_{\mu}(u) := \sum_{g=0}^{\infty} \dfrac{u^{2g-2}}{\bn(g,\mu)!}\ch_{g,\mu} .
\end{equation}
\end{definition}
They are related to each other through the inclusion-exclusion formula.

We have
\begin{align}
\label{hurwexpr}
\dih_{\mu}(u) & = u^{-|\mu|-l(\mu)}\cor{e^{\alpha_1}e^{u\mathcal{F}_2}\prod_{i=1}^{l(\mu)} \dfrac{\alpha_{-\mu_i}}{\mu_i}}\\ \nonumber
&= u^{-|\mu|-l(\mu)}\cor{e^{\alpha_1}e^{u\mathcal{F}_2}\br{\prod_{i=1}^{l(\mu)} \dfrac{\alpha_{-\mu_i}}{\mu_i}}e^{-u\mathcal{F}_2}e^{-\alpha_1}}\\ \nonumber
&= u^{-|\mu|-l(\mu)}\cor{\prod_{i=1}^{l(\mu)}\br{e^{\alpha_1}e^{u\mathcal{F}_2}\dfrac{\alpha_{-\mu_i}}{\mu_i}e^{-u\mathcal{F}_2}e^{-\alpha_1}}} .
\end{align}
The second equality holds since $e^{-u\mathcal{F}_2}$ and $e^{-\alpha_1}$ fix the vacuum vector.

\subsection{$\A$-operators}
Now, following~\cite{OP02}, we introduce certain operators that we use later on to rewrite the formula for Hurwitz numbers.
\begin{definition}
Define
\begin{equation}\label{defA}
\A(a,b) := \br{\dfrac{\zeta(b)}{b}}^a \,
\sum_{k\in \Z} \frac{\zeta(b)^{k}}{(a+1)_k} \, \cE_k(b),
\end{equation}
where $a$ and $b$ are parameters and we use the standard notation:
\begin{equation}
(a+1)_k = \frac{(a+k)!}{a!} =
\begin{cases}
 (a+1) (a+2) \cdots (a+k) \,, & k\ge 0 \,,  \\
 (a (a-1) \cdots (a+k+1))^{-1}  \,, & k \le 0 \,.
\end{cases}
\end{equation}
If $a\ne 0,1,2,\dots$, the sum in \eqref{defA} is
infinite in both directions. If $a$ is a non-negative
integer, the summands with $k \le - a - 1$ in \eqref{defA}
vanish.

Note that Proposition 3 of \cite{OP02} implies that the correlator
\begin{equation}
\cor{\A(z_1,uz_1)\dots\A(z_n,uz_n)}
\end{equation}
is well-defined for all $\br{z_1,\dots,z_n}\in \Omega \subset \mathbb{C}^n$ and sufficiently small $u$, where
\begin{equation}
\Omega = \br{\br{z_1,\dots,z_n} \Bigg| |z_k| > \sum_{i=1}^{k-1} |z_i|,\; k=1,\dots,n} .
\end{equation}

\end{definition}
\begin{definition}
Define the \emph{connected correlator} of $\A$-operators
$$\ccor{\A(z_1,uz_1)\dots\A(z_n,uz_n)}$$
through the disconnected ones via the inclusion-exclusion formula.
\end{definition}

\begin{proposition}
\begin{equation}
\label{hconexpr}
\ch_{g;\mu_1\dots\mu_n} = \bn(g,\mu)!\,\prod_{i=1}^n\br{\dfrac{\mu_i^{\mu_i-1}}{\mu_i!}}[u^{2g-2+n}]\ccor{\A(\mu_1,u\mu_1)\dots\A(\mu_n,u\mu_n)}
\end{equation}
where $[u^{2g-2+n}]\ccor{\A(\mu_1,u\mu_1)\dots\A(\mu_n,u\mu_n)}$ stands for the coefficient of $u^{2g-2+n}$ in $\ccor{\A(\mu_1,u\mu_1)\dots\A(\mu_n,u\mu_n)}$.
\end{proposition}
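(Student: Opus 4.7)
The plan is to start from the already-established expression \eqref{hurwexpr} for the disconnected genus-generating function
\[
\dih_\mu(u)=u^{-|\mu|-l(\mu)}\cor{\prod_{i=1}^{l(\mu)}\br{e^{\alpha_1}e^{u\mathcal{F}_2}\tfrac{\alpha_{-\mu_i}}{\mu_i}e^{-u\mathcal{F}_2}e^{-\alpha_1}}},
\]
reduce the conjugated one-part operator inside the bracket to a multiple of $\A(\mu_i,u\mu_i)$, and then pass to connected correlators and extract the genus-$g$ coefficient.

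First I would conjugate by $e^{u\mathcal{F}_2}$. Since $[\mathcal{F}_2,E_{k-n,k}]=-n(k-\tfrac{n}{2})E_{k-n,k}$, one gets $e^{u\mathcal{F}_2}\cE_n(z)e^{-u\mathcal{F}_2}=\cE_n(z-nu)$ for $n\neq 0$. Applied to $\alpha_{-\mu_i}=\cE_{-\mu_i}(0)$, this produces
\[
e^{u\mathcal{F}_2}\alpha_{-\mu_i}e^{-u\mathcal{F}_2}=\cE_{-\mu_i}(u\mu_i).
\]

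Next I would conjugate by $e^{\alpha_1}$. Using the commutation relation~\eqref{commcEE}, one has $[\alpha_1,\cE_k(b)]=\zeta(b)\,\cE_{k+1}(b)$, so iterating and summing the BCH expansion gives
\[
e^{\alpha_1}\cE_{-\mu}(b)e^{-\alpha_1}=\sum_{j\ge 0}\frac{\zeta(b)^j}{j!}\,\cE_{-\mu+j}(b)
=\zeta(b)^\mu\sum_{k\ge -\mu}\frac{\zeta(b)^k}{(\mu+k)!}\,\cE_k(b).
\]
Now I would match this with $\A(\mu,b)$ from \eqref{defA}. The Pochhammer symbol satisfies $(\mu+1)_k=(\mu+k)!/\mu!$ for all $k\ge -\mu$ (one verifies the two cases $k\ge 0$ and $-\mu\le k\le -1$ separately against the piecewise definition). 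Therefore
\[
\A(\mu,b)=\frac{\mu!\,\zeta(b)^\mu}{b^\mu}\sum_{k\ge -\mu}\frac{\zeta(b)^k}{(\mu+k)!}\,\cE_k(b),
\]
and so
\[
e^{\alpha_1}\cE_{-\mu}(b)e^{-\alpha_1}=\frac{b^\mu}{\mu!}\,\A(\mu,b).
\]

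Substituting back with $b=u\mu_i$ and collecting the prefactors yields
\[
\dih_\mu(u)=u^{-n}\prod_{i=1}^n\frac{\mu_i^{\mu_i-1}}{\mu_i!}\,\cor{\A(\mu_1,u\mu_1)\cdots\A(\mu_n,u\mu_n)},
\]
where $n=l(\mu)$, because $\prod_i (u\mu_i)^{\mu_i}=u^{|\mu|}\prod_i\mu_i^{\mu_i}$ combines with $u^{-|\mu|-n}$ and $\prod_i 1/\mu_i$ to the displayed factor. Applying the inclusion--exclusion principle to pass from disconnected to connected Hurwitz numbers turns $\cor{\cdot}$ into $\ccor{\cdot}$ on the right, and the same identity holds for $\ch_\mu(u)$. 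Finally, extracting the coefficient of $u^{2g-2}$ on both sides and using $\ch_{g,\mu}=\bn(g,\mu)!\cdot[u^{2g-2}]\ch_\mu(u)$ produces exactly \eqref{hconexpr}.

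The only nontrivial step is the matching of the BCH series for $e^{\alpha_1}\cE_{-\mu}(b)e^{-\alpha_1}$ with the defining series of $\A(\mu,b)$; the main care needed is verifying that the ratio $(\mu+k)!/\mu!$ agrees with the piecewise definition of $(\mu+1)_k$ in the range $-\mu\le k\le -1$, and keeping track of the power of $u$ absorbed by the prefactor $(\zeta(b)/b)^\mu$ when $b=u\mu_i$. Once that identification is done, the rest is a clean bookkeeping of prefactors and a standard inclusion--exclusion argument.
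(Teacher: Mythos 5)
Your proof is correct and follows essentially the same route as the paper's: conjugate $\alpha_{-\mu_i}$ first by $e^{u\mathcal{F}_2}$ and then by $e^{\alpha_1}$ to produce $\frac{(u\mu_i)^{\mu_i}}{\mu_i!}\A(\mu_i,u\mu_i)$, pass to connected correlators by inclusion--exclusion, and compare coefficients of $u^{2g-2}$. The only difference is that you spell out the BCH computation and the Pochhammer matching that the paper delegates to Lemma~2 of \cite{OP02}, and your bookkeeping of the prefactors is accurate.
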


\begin{proof} The main part of the proof follows~\cite{OP02}. Note that
\begin{equation}
e^{u \mathcal{F}_2} \, \alpha_{-m} \, e^{-u \mathcal{F}_2} = \cE_{-m}(um)
\end{equation}
which is easy to see since $\mathcal{F}_2$ acts diagonally. From the commutation relations for $\cE_i$ we see that
\begin{equation}
e^{\alpha_1} \, \cE_{-m}(s) \, e^{-\alpha_1}  =
\frac{\zeta(s)^{m}}{m!}
\sum_{k\in \Z} \frac{\zeta(s)^{k}}{(m+1)_k} \, \cE_k(s) .
\end{equation}
The previous two formulas imply the following, for $m\in\bs{1,2,3,\dots}$ (Lemma 2 of \cite{OP02}):
\begin{equation}
e^{\alpha_1} \, e^{u \mathcal{F}_2} \, \alpha_{-m} \, e^{-u \mathcal{F}_2} \, e^{-\alpha_1} =
\frac{u^m \, m^m}{m!} \, \A(m,um) .
\end{equation}
Now we can rewrite formula \eqref{hurwexpr} as
\begin{equation}
h_{\mu_1\dots\mu_n}(u) = u^{-n}\prod_{i=1}^n\br{\dfrac{\mu_i^{\mu_i-1}}{\mu_i!}}\cor{\A(\mu_1,u\mu_1)\dots\A(\mu_n,u\mu_n)} .
\end{equation}
Recall that the connected Hurwitz numbers can be expressed though the disconnected ones with the help of the inclusion-exclusion formula.
Since the relation between connected and disconnected Hurwitz numbers is the same as the one between connected and disconnected correlators, we have:
\begin{equation}
\ch_{\mu_1\dots\mu_n}(u) = u^{-n}\prod_{i=1}^n\br{\dfrac{\mu_i^{\mu_i-1}}{\mu_i!}}\ccor{\A(\mu_1,u\mu_1)\dots\A(\mu_n,u\mu_n)} .
\end{equation}
Comparing the coefficients in front of the same powers of $u$ on the right hand side and on the left hand side we directly obtain the statement of the proposition.
\end{proof}

Now we see that in order to prove Theorem~\ref{ThPol} we only have to show that 
expressions $[u^{2g-2+n}]\ccor{\A(\mu_1,u\mu_1)\dots\A(\mu_n,u\mu_n)}$ are polynomial in $\mu_1,\dots,\mu_n$.


\subsection{Further properties of $\A$-operators} 

In this subsection we modify  an expression for the connected correlators of $\A$-operators in order to exclude possible so-called \emph{unstable terms}.

\begin{definition}
Let $\A_k$ be the coefficients of the expansion of
the operator $\A(z,uz)$ in powers of $z$:
\begin{equation}
  \label{defAk}
  \A(z,uz)= \sum_{k\in \Z} \A_k\, z^k \,.
\end{equation}
\end{definition}

We will use the following theorem, due to Okounkov and Pandharipande:
\begin{theorem}[Okounkov-Pandharipande, \cite{OP02}]
\label{ThA}
\begin{equation}
  \label{cA2}
  \left[\A_k, \A_l\right] = (-1)^l \delta_{k+l-1} \,.
\end{equation}
\end{theorem}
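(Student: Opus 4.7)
The plan is to derive this commutation relation directly from identity~\eqref{commcEE} for the $\cE$-operators. First I would factor the scalar prefactors $(\zeta(uz)/(uz))^z$ and $(\zeta(uw)/(uw))^w$ out of the commutator, since they commute with every operator. This reduces the problem to verifying
\begin{equation*}
\bigl[\A(z,uz), \A(w,uw)\bigr] = \left(\frac{\zeta(uz)}{uz}\right)^z \left(\frac{\zeta(uw)}{uw}\right)^w \sum_{k,l \in \Z} \frac{\zeta(uz)^k \zeta(uw)^l \, \zeta(u(kw - lz))}{(z+1)_k (w+1)_l}\, \cE_{k+l}(u(z+w)).
\end{equation*}

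The next step is to split the double sum according to $n = k + l$. For $n \neq 0$ the operator $\cE_n(u(z+w))$ has no scalar part, and I would show that the sum over $(k, l)$ with $k + l = n$ vanishes as a formal series in $z$ and $w$. This reduces to a hypergeometric-type identity involving the Pochhammer symbols $(z+1)_k (w+1)_l$ and the factor $\zeta(u(kw - lz))$; the natural approach is to write $\zeta(u(kw - lz)) = e^{u(kw - lz)/2} - e^{-u(kw - lz)/2}$ so the sums separate into products of geometric-type series in $\zeta(uz)$ and $\zeta(uw)$ that can be identified with a generalized Chu--Vandermonde convolution. For $n = 0$, only the scalar piece $1/\zeta(u(z+w))$ of $\cE_0(u(z+w))$ contributes (the operator part again cancels by the same identity), and I would simplify the remaining $u$-dependent scalar to the $u$-independent generating series $\sum_{k+l=1}(-1)^l z^k w^l$ times the identity. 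Extracting the coefficient of $z^k w^l$ then yields the claim.

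The main obstacle is precisely the hypergeometric identity that forces the operator part of the double sum to cancel, and this is the combinatorial heart of the statement; verifying it cleanly is where the work of~\cite{OP02} really goes. A more robust alternative would be to verify the claim one matrix entry at a time in the basis $\{v_\lambda\}$ of $\cV_0$, using the explicit action of $\cE_n$ on basis vectors recalled after the definition of $E_{ij}$: on each fixed entry the double sum truncates to a finite one and the resulting identity becomes elementary, though one would then need a separate (and short) argument that the commutator is a scalar operator, e.g.\ by counting the charge shift of $\A_k$ and using that $\A_k \A_l + \A_{l} \A_k$ has a preferred form.
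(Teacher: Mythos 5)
The paper itself does not prove this statement: it is imported verbatim from Okounkov--Pandharipande \cite{OP02} (it is their commutation relation for the coefficients of $\A(z,uz)$), so there is no in-paper argument to compare against. Your outline does follow the route taken in \cite{OP02}: expand $[\A(z,uz),\A(w,uw)]$ using \eqref{commcEE}, pull out the scalar prefactors, group the double sum by $n=k+l$, show that the coefficient of every $\cE_n(u(z+w))$ cancels except for the central part of $\cE_0$, and check that the surviving scalar is $u$-independent and equal to the generating series of $(-1)^l\delta_{k+l-1}$. The intermediate formula you write down, with the factor $\zeta(u(kw-lz))$ and the Pochhammer denominators $(z+1)_k(w+1)_l$, is the correct starting point.

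As a proof, however, the proposal has a genuine gap, and you essentially name it yourself: the cancellation of the operator part for each $n$, together with the evaluation of the residual scalar, \emph{is} the theorem. Writing that this ``reduces to a hypergeometric-type identity'' which ``can be identified with a generalized Chu--Vandermonde convolution'' is a statement of intent, not an argument; in \cite{OP02} this step is a genuine hypergeometric summation carried out in detail, and nothing in the proposal establishes it (nor even states precisely which identity is needed). Note also that splitting $\zeta(u(kw-lz))$ into two exponentials does not make the sum over $k+l=n$ separate into a product of independent sums, since the constraint $k+l=n$ couples the indices, so the claimed reduction to ``products of geometric-type series'' needs more care than indicated. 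The fallback of checking matrix entries on the basis $v_\lambda$ is likewise only sketched: the auxiliary fact that the commutator is a scalar operator does not follow from charge considerations alone --- each $\A_k$ already preserves charge, so the commutator is charge-preserving but a priori far from central --- and the ``preferred form'' of $\A_k\A_l+\A_l\A_k$ is not identified. The strategy is the right one, but the combinatorial heart of the statement is left unproved.
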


\begin{definition}
Define
\begin{equation}
  \label{defAplus}
  \A_+(z,uz):= \sum_{k=1}^\infty \A_k\, z^k \,.
\end{equation}
\end{definition}

\begin{notation}
For any operator $\mathcal{P}(u)$ define
\begin{align}
\cor{\mathcal{P}(u)}_k &:= [u^k]\cor{\mathcal{P}(u)} & (\mathrm{the\ coefficient\ of\ } u^k\  \mathrm{in}\ \cor{\mathcal{P}(u)}) \\ \nonumber
\ccor{\mathcal{P}(u)}_k &:= [u^k]\ccor{\mathcal{P}(u)} & (\mathrm{the\ coefficient\ of\ } u^k\  \mathrm{in}\ \ccor{\mathcal{P}(u)})
\end{align}
\end{notation}

\begin{definition}
We denote by $\mathcal{Y}_{n,k}$ the set of $\bs{1,\dots,n}$ Young tableaux (i.~e.~Young diagrams of size $n$ with each box labeled by a number from $1$ to $n$ such that no two boxes are labeled by the same number) with certain conditions and additional row labels.

Namely, let $y$ be such a tableau. Let $c_{i,j}(y)$ be the number in the $i$-th row and $j$-th column. Let $h(y)$ be the number of rows, and let $l_i(y)$ be the length of the $i$-th row. Now we are ready to describe the conditions.

First, the numbers in the rows should be ascending, i.~e.~ for any $i$ and for any $j_1 < j_2$ we have $c_{i,j_1}(y) < c_{i,j_2}(y)$. Second, the numbers in the first column that correspond to rows of the same length should be ascending, i.~e., 
if $l_{i_1}(y)=l_{i_2}(y)$ and $i_1<i_2$, then $c_{i_1,1}(y) < c_{i_2,1}(y)$.

By $\lambda_i(y)\in\bs{-1,0,1,\dots}$ we denote additional labels that are assigned to all rows, and we require that 
$\sum_{i=1}^{h(y)}\lambda_i(y) = k$.
\end{definition}

Note that there is a one-to-one correspondence between the elements of $\mathcal{Y}_{n,k}$ and the terms in the expression for a disconnected correlator through the connected ones (the ``inverse'' inclusion-exclusion formula). 
Rows in $y$ correspond to individual connected correlators in the product, while labels $\lambda$ correspond to the Euler characteristics of these connected correlators. This can be expressed through the following formula:
\begin{align}
\label{Aexp}
 &\cor{\A(z_1,uz_1)\dots\A(z_n,uz_n)}_k\\ \nonumber
 &=\sum_{y \in \mathcal{Y}_{n,k}}\prod_{i=1}^{h(y)}\ccor{\A(z_{c_{i,1}(y)},uz_{c_{i,1}(y)})\dots\A(z_{c_{i,l_i(y)}(y)},uz_{c_{i,l_i(y)}(y)})}_{\lambda_i(y)} .
\end{align}
The terms in this sum that contain either $\ccor{\A(z_i,uz_i)}_{-1}$ or $\ccor{\A(z_i,uz_i)\A(z_j,uz_j)}_{0}$ are called \emph{unstable terms}. If we exclude all unstable terms, we obtain the following expression.

\begin{proposition} We have:
\begin{align}
\label{Aplusexp}
 &\cor{\A_+(z_1,uz_1)\dots\A_+(z_n,uz_n)}_k\\ \nonumber
 &=\sum_{y \in \widetilde{\mathcal{Y}}_{n,k}}\prod_{i=1}^{h(y)}\ccor{\A(z_{c_{i,1}(y)},uz_{c_{i,1}(y)})\dots\A(z_{c_{i,l_i(y)}(y)},uz_{c_{i,l_i(y)}(y)})}_{\lambda_i(y)}.
\end{align}
Here \begin{equation}
{\mathcal{Y}}_{n,k} = \bs{y\in\mathcal{Y}_{n,k}\big|\, l_i(y)=1 \Rightarrow \lambda_i(y)\neq -1,\; l_i(y)=2 \Rightarrow \lambda_i(y)\neq 0}
\end{equation}
In other words, $\cor{\A_+(z_1,uz_1)\dots\A_+(z_n,uz_n)}_k$ is equal to $\cor{\A(z_1,uz_1)\dots\A(z_n,uz_n)}_k$ with all the unstable terms dropped.
\end{proposition}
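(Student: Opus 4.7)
The plan is to decompose $\A(z,uz) = \A_+(z,uz) + \A_{\leq 0}(z,uz)$, with $\A_{\leq 0}(z,uz) := \sum_{k \leq 0} \A_k z^k$, and to expand the product $\prod_{i=1}^n \A(z_i, uz_i)$ into $2^n$ summands indexed by the subset $S \subseteq \{1, \ldots, n\}$ of positions carrying an $\A_{\leq 0}$ factor. The $S = \emptyset$ summand is exactly $\cor{\prod_i \A_+(z_i, uz_i)}$, so the claim reduces to identifying the $u^k$-coefficient of the sum over the remaining $2^n - 1$ summands with the part of the inclusion-exclusion expansion \eqref{Aexp} coming from unstable tableaux $y \in \mathcal{Y}_{n,k} \setminus \widetilde{\mathcal{Y}}_{n,k}$.

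The central structural input is Theorem \ref{ThA}: $[\A_k, \A_l] = (-1)^l \delta_{k+l,1}$. From this, any pair of $\A_{\leq 0}$-operators commutes (since the sum of their indices is at most $0$, so never equals $1$), while the commutator $[\A_{\leq 0}(z_i, uz_i), \A_+(z_j, uz_j)]$ is a scalar-valued formal Laurent series $\sum_{k \leq 0} (-1)^{1-k} z_i^k z_j^{1-k}$ which, in the convergence region $|z_i| > |z_j|$ fixed by the definition of $\Omega$, sums to $-z_i z_j / (z_i + z_j)$, independent of $u$ and involving only the two variables $z_i, z_j$.

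Next, I would identify the two unstable contributions explicitly. From the closed form $\cor{\A(z, uz)} = \zeta(uz)^{z-1}/(uz)^z$ one reads off $\ccor{\A(z, uz)}_{-1} = 1/z$, while $\cor{\A_+(z, uz)}_{-1} = 0$ is immediate from the definition of $\A_+$; hence the entire $u^{-1}$-part of the one-point function comes from the $\A_{\leq 0}$-piece, and it is precisely the unstable singleton contribution. A short direct calculation using $\cE_0(w)|0\rangle = \zeta(w)^{-1}|0\rangle$ and $\cE_{-m}(w)|0\rangle$ for $m > 0$ shows that $\ccor{\A(z,uz)\A(w,uw)}_0$ agrees, up to sign and symmetry in $z, w$, with the scalar commutator computed above, identifying the unstable pair term. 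With these identifications in place, for each nonempty $S$ I would use Theorem \ref{ThA} to commute the operators $\A_{\leq 0}(z_i)$, $i \in S$, past the remaining $\A_+(z_j)$, picking up scalar commutators. Each $\A_{\leq 0}(z_i)$ then either ends up acting in isolation (contributing its $u^{-1}$-VEV, i.e.\ the unstable singleton $\ccor{\A(z_i, uz_i)}_{-1}$, corresponding to an ``unstable singleton'' row of some $y$) or contracts with a single $\A_+(z_j)$ through a scalar commutator (contributing the unstable pair $\ccor{\A(z_i, uz_i)\A(z_j, uz_j)}_0$, corresponding to an ``unstable pair'' row of $y$). The uncontracted $\A_+$'s produce, by induction on $n$, the stable-row part of \eqref{Aexp} on the complementary indices.

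The main obstacle will be the combinatorial bookkeeping in this last step: one must check that the sum over subsets $S$ together with all possible isolated/paired arrangements of each $\A_{\leq 0}(z_i)$ reproduces, term by term and with correct signs, the sum over $y \in \mathcal{Y}_{n,k} \setminus \widetilde{\mathcal{Y}}_{n,k}$ in the inverse inclusion-exclusion formula. In particular, one must verify that the signs $(-1)^l$ from Theorem \ref{ThA} combine coherently with the signs arising from inclusion-exclusion and the ordering conventions on rows of Young tableaux; a mild subtlety is that a pair of indices $\{i, j\}$ in an unstable pair row can correspond either to $i \in S, j \notin S$ or $j \in S, i \notin S$, and the sum over both possibilities must reconcile with the single unique unstable pair row in the tableau picture. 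The other technical ingredient is the explicit computation of $\ccor{\A(z, uz)\A(w, uw)}_0$ and its identification with the formal commutator above.
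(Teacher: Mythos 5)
Your plan is, in substance, the paper's own proof: decompose $\A$ into $\A_+$ plus a remainder, move the remainders toward the covacuum using the commutation relation \eqref{cA2}, identify the scalars that appear with the unstable one- and two-point factors \eqref{1ptce} and \eqref{2ptce}, and conclude by induction on $n$. The only organizational difference is that you expand all $n$ factors at once into $2^n$ terms and then contract, while the paper sweeps left to right one operator at a time (its computations \eqref{Apluse}--\eqref{Apluse-2}); your remark that a pair $\{i,j\}$ can only be contracted with the larger index carrying the $\A_{\leq 0}$ factor (since that factor only travels leftward) is exactly what resolves the double-counting worry you raise, and the sign and region of convergence of your commutator series match the paper's $\ccor{\A(z_j,uz_j)\A(z_i,uz_i)}_0$ for $j<i$.

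There is, however, one load-bearing fact that your write-up uses but does not establish. When an uncontracted $\A_{\leq 0}(z_i)$ reaches the covacuum you let it ``contribute its $u^{-1}$-VEV''. What the argument actually requires is the operator identity $\langle 0|\,\A_{\leq 0}(z,uz)=\frac{1}{uz}\,\langle 0|$, i.e.\ the paper's \eqref{lav}: the remainder must act on the covacuum as the \emph{exact} scalar $1/(uz)$, with no higher-order terms in $u$ and no component transverse to $\langle 0|$. Otherwise the leftover factor $\langle 0|\prod_{i}\A_{\leq 0}(z_i,uz_i)\prod_{j}\A_+(z_j,uz_j)|0\rangle$ cannot be evaluated, and the identification of each isolated remainder with precisely the singleton unstable factor $u^{-1}\ccor{\A(z_i,uz_i)}_{-1}$ fails. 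Your justification --- comparing the $u^{-1}$ coefficients of $\cor{\A}$ and $\cor{\A_+}$ --- only pins down the number $\langle 0|\A_{\leq 0}(z,uz)|0\rangle$ and says nothing about the other components of the covector $\langle 0|\A_{\leq 0}(z,uz)$. The identity is true (only $\cE_0$ survives against the covacuum, and the non-positive-in-$z$ part of $(\zeta(uz)/uz)^{z}\,\zeta(uz)^{-1}$ is exactly $1/(uz)$), so the gap is repairable, but it must be stated and proved before the contraction scheme closes; with it in place the remaining combinatorial bookkeeping is the same as in the paper.
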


\begin{proof}
Let us first compute the unstable factors, i.~e.~the
genus-zero one- and two-point connected correlators.

Note that
\begin{equation}
\label{1pte}
\cor{\A(z,uz)} = \dfrac{1}{uz}+\dfrac{z(z-1)}{24}u+O(u^2).
\end{equation}
This directly implies the following formula for the genus-zero one-point correlator:
\begin{equation}
\label{1ptce}
\ccor{\A(z,uz)}_{-1} = \dfrac{1}{z} .
\end{equation}

The definition of the operator $\A$ implies that
\begin{equation}
\label{lav}
\langle 0|\A(z,uz) = \dfrac{1}{uz}\langle 0| + \langle 0| \A_+(z,uz) .
\end{equation}
The definition of the two-point connected correlators together with formulas \eqref{1pte}, \eqref{lav} and \eqref{cA2} implies the following formula for the genus-zero two-point connected correlator:
\begin{align}
\label{2ptce}
\ccor{\A(z_1,uz_1)\A(z_2,uz_2)}_{0}
& = \cor{\A(z_1,uz_1)\A(z_2,uz_2)}_0  -\cor{\A(z_1,uz_1)}_{-1}\cor{\A(z_2,uz_2)}_{1}
 \\ \notag
 & \phantom{=\ } -\cor{\A(z_1,uz_1)}_{1}\cor{\A(z_2,uz_2)}_{-1}\\ \nonumber
&=\cor{\A_+(z_1,uz_1)\A(z_2,uz_2)}_0-\cor{\A(z_1,uz_1)}_{1}\cor{\A(z_2,uz_2)}_{-1}\\ \nonumber
&=\cor{\A(z_2,uz_2)\A_+(z_1,uz_1)}_0+z_1\sum_{k=0}^\infty(-1)^k\br{\dfrac{z_1}{z_2}}^k
\\ \notag &
\phantom{=\ } 
-\cor{\A(z_2,uz_2)}_{-1}\cor{\A(z_1,uz_1)}_{1}\\ \nonumber
&=\cor{\A_+(z_2,uz_2)\A_+(z_1,uz_1)}_0+z_1\sum_{k=0}^\infty(-1)^k\br{\dfrac{z_1}{z_2}}^k\\ \nonumber
&=z_1\sum_{k=0}^\infty(-1)^k\br{\dfrac{z_1}{z_2}}^k .
\end{align}

Now we prove the statement of the proposition by induction over the number of operators $n$ in the correlator on the left hand side. From the definition of the operator $\A$ it is easy to see that the statement holds for $n=1$. Suppose that it holds for the correlator of any number of operators less than $n$. We will prove that it holds for $n$ operators. 

Taking into account \eqref{lav}, \eqref{cA2}, \eqref{1ptce} and \eqref{2ptce} we see that
\begin{align}
\label{Apluse}
&
\cor{\A(z_1,uz_1)\dots  \A(z_n,uz_n)}_k 
\\ \notag &
= \dfrac{1}{z_1} \cor{\A(z_2,uz_2)\dots\A(z_n,uz_n)}_{k+1}
\\ \notag &
\phantom{ = }\ +\cor{\A_+(z_1,uz_1)\A(z_2,uz_2)\dots\A(z_n,uz_n)}_k
\\ \nonumber &
=\ccor{\A(z_1,uz_1)}_{-1} \cor{\A(z_2,uz_2)\dots\A(z_n,uz_n)}_{k+1}\\ \nonumber
&\phantom{ = }\ + z_1\sum_{k=0}^\infty(-1)^k\br{\dfrac{z_1}{z_2}}^k \cor{\A(z_3,uz_3)\dots\A(z_n,uz_n)}_{k}\\ \nonumber
&\phantom{ = }\ +\cor{\A(z_2,uz_2)\A_+(z_1,uz_1)\A(z_3,uz_3)\dots\A(z_n,uz_n)}_{k}\\ \nonumber
&=\ccor{\A(z_1,uz_1)}_{-1} \cor{\A(z_2,uz_2)\dots\A(z_n,uz_n)}_{k+1}\\ \nonumber
&\phantom{ = }\ + \ccor{\A(z_1,uz_1)\A(z_2,uz_2)}_0\cor{\A(z_3,uz_3)\dots\A(z_n,uz_n)}_{k}\\ \nonumber
&\phantom{ = }\ + \ccor{\A(z_2,uz_2)}_{-1} \cor{\A_+(z_1,uz_1)\A(z_3,uz_3)\dots\A(z_n,uz_n)}_{k+1}\\ \nonumber
&\phantom{ = }\ + \cor{\A_+(z_1,uz_1)\A_+(z_2,uz_2)\A(z_3,uz_3)\dots\A(z_n,uz_n)}_{k} .
\end{align}
We continue with the same computation (replacing the leftmost operator $\A$ with $\A_+$ and commuting it to the right, collecting the emerging coefficients in the unstable correlators), finally arriving at the following expression.
\begin{align}\label{Apluse-2}
&\cor{\A(z_1,uz_1)\dots\A(z_n,uz_n)}_k =\cor{\A_+(z_1,uz_1)\dots\A_+(z_n,uz_n)}_{k}\\ \nonumber
&\phantom{ = }\ +
\sum_{p=3}^{n-1}\,\sum_{q=0}^{[\frac{n-p}2]}\sum_{y \in \widehat{\mathcal{Y}}^{p,q}_{n,k}}\cor{\A_+(z_{c_{1,1}(y)},uz_{c_{1,1}(y)})\dots\A_+(z_{c_{1,p}(y)},uz_{c_{1,p}(y)})}_{k+h(y)-q-1}\\ \nonumber
&\phantom{ = }\ \phantom{ = }\ \times\prod_{i=2}^{q+1}\ccor{\A(z_{c_{i,1}(y)},uz_{c_{i,1}(y)})\A(z_{c_{i,2}(y)},uz_{c_{i,2}(y)})}_{0}
\prod_{i=q+2}^{h(y)}\ccor{\A(z_{c_{i,1}(y)},uz_{c_{i,1}(y)})}_{-1}\\ \nonumber
&\phantom{ = }\ +
\sum_{q=0}^{[\frac{n-2}{2}]}\sum_{y \in \widehat{\mathcal{Y}}^{2,q}_{n,k}}\cor{\A_+(z_{c_{s(y),1}(y)},uz_{c_{s(y),1}(y)})\A_+(z_{c_{s(y),2}(y)},uz_{c_{s(y),2}(y)})}_{k+h(y)-q-1}\\ \nonumber
&\phantom{ = }\ \phantom{ = }\ \times\prod^{q+1}_{\substack{i=1\\ i\not=s(y)}}\ccor{\A(z_{c_{i,1}(y)},uz_{c_{i,1}(y)})\A(z_{c_{i,2}(y)},uz_{c_{i,2}(y)})}_{0}
\prod_{i=q+2}^{h(y)}\ccor{\A(z_{c_{i,1}(y)},uz_{c_{i,1}(y)})}_{-1}\\ \nonumber
&\phantom{ = }\ +
\sum_{q=0}^{[\frac{n-1}2]}\sum_{y \in \widehat{\mathcal{Y}}^{1,q}_{n,k}}\cor{\A_+(z_{c_{s(y),1}(y)},uz_{c(y)_{s(y),1}(y)})}_{k+h(y)-q-1}\\ \nonumber
&\phantom{ = }\ \phantom{ = }\ \times\prod_{i=1}^{q}\ccor{\A(z_{c_{i,1}(y)},uz_{c_{i,1}(y)})\A(z_{c_{i,2}(y)},uz_{c_{i,2}(y)})}_{0}
\prod^{h(y)}_{\substack{i={q+1}\\ i\not = s(y) }}\ccor{\A(z_{c_{i,1}(y)},uz_{c_{i,1}(y)})}_{-1}
\end{align}
Here $\widehat{\mathcal{Y}}^{p,q}_{n,k}$ contains all elements $y$ of $\mathcal{Y}_{n,k}$ such that there is precisely one row of length $p$ labeled by $k+h(y)-q-1$, $q$ rows of length $2$ labeled by $0$, and all other rows are of length $1$ and labeled by $-1$. $s(y)$ stands for the position of the row with $p$ elements labeled by $k+h(y)-q-1$. If $p=2$ and $k+h(y)-q-1=0$ or $p=1$ and $k+h(y)-q-1=-1$ one cannot determine $s(y)$ in this way, but this is not a problem since, due to the fact that $\cor{\A_+(z,uz)}_{-1}=0$ and $\cor{\A_+(z_{1},uz_{1})\A_+(z_{2},uz_{2})}_0=0$, the corresponding term vanishes in any case. Also note that, obviously, for $p\geq 3$ we have $s(y)=1$.

Note that the right hand side of formula \eqref{Apluse-2} is equal to the correlator
\begin{equation}
\cor{\A_+(z_1,uz_1)\dots\A_+(z_n,uz_n)}_{k}
\end{equation}
plus all possible unstable terms entering exactly once, since, by the induction hypothesis, the correlators of less than $n$ operators $\A_+$ are equal to sums of all possible stable terms. This means that upon moving these terms to the left hand side and subtracting them from $\cor{\A(z_1,uz_1)\dots\A(z_n,uz_n)}_k$ we get precisely all possible stable terms. This proves the proposition.
\end{proof}


\subsection{Polynomiality}

In this subsection we establish polynomiality of some correlators, and this allows us to complete the proof of Theorem \ref{ThPol}.

\begin{proposition}
\label{Aplusprop}
The series
\begin{equation}
\displaystyle\dfrac{\cor{\A_+(z_1,uz_1)\dots\A_+(z_n,uz_n)}_k}{z_1 \cdots z_n}
\end{equation}
for $(n,k)\notin\bs{(1,-1),(2,0)}$ is a symmetric polynomial in $z_1,\dots,z_n$.
\end{proposition}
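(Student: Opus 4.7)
I would prove this proposition in three stages: symmetry, formal power-series structure, and the crucial finite-degree property.

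\emph{Symmetry.} By Theorem~\ref{ThA}, $[\A_k,\A_l]=(-1)^l\delta_{k+l-1}$ vanishes unless $k+l=1$. Since $\A_+(z,uz)=\sum_{k\ge 1}\A_k z^k$ only involves $\A_k$ with $k\ge 1$, the relation $k+l=1$ is impossible for any pair of such factors, so the operators $\A_+(z_i,uz_i)$ commute pairwise. Hence $\cor{\A_+(z_1,uz_1)\cdots\A_+(z_n,uz_n)}$ is symmetric in the $z_i$, and dividing by the symmetric product $z_1\cdots z_n$ preserves this symmetry.

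\emph{Power-series structure.} Because $\A_+(z,uz)$ has only strictly positive powers of $z$, the correlator is divisible by $z_1\cdots z_n$, and after this division and after extracting $[u^k]$ we obtain a formal power series in $z_1,\dots,z_n$ with scalar coefficients. What remains is to show that for $(n,k)\notin\{(1,-1),(2,0)\}$ this series is actually a polynomial.

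\emph{Polynomiality (the main step).} Expanding each $\A(z,uz)$ via \eqref{defA}, the correlator $\cor{\A(z_1,uz_1)\cdots\A(z_n,uz_n)}$ becomes a sum, indexed by tuples $(m_1,\dots,m_n)\in\Z^n$ with $\sum m_i=0$ (charge conservation), of products
\begin{equation*}
\prod_i\left(\frac{\zeta(uz_i)}{uz_i}\right)^{z_i}\frac{\zeta(uz_i)^{m_i}}{(z_i+1)_{m_i}}\cdot\cor{\cE_{m_1}(uz_1)\cdots\cE_{m_n}(uz_n)}.
\end{equation*}
Using \eqref{commcEE} to move annihilating $\cE_k$'s with $k>0$ to the right, together with $\cE_0(z)|0\rangle=|0\rangle/\zeta(z)$, the $\cE$-correlator becomes an explicit rational expression whose only $u$-dependence is through combinations $uz_i$ and $u\sum c_j z_j$. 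Consequently $[u^k]$ of the $\A$-correlator is a rational function of the $z_i$ whose possible denominators are of two types only: the $(z_i+1)_{m_i}$ factors from \eqref{defA}, and the resonance linear forms $\sum c_j z_j$ arising from \eqref{commcEE}. Moreover, only finitely many tuples $(m_i)$ contribute to a given $[u^k]$, since each $\zeta(uz_i)^{m_i}$ has $u$-valuation $m_i$, so charge conservation plus truncation at $u^k$ forces $\sum|m_i|$ to be bounded. Passing from $\A$ to $\A_+$ via \eqref{Aplusexp} subtracts precisely the one-point ($\ccor{\A(z,uz)}_{-1}=1/z$) and two-point genus-zero ($\ccor{\A(z_1,uz_1)\A(z_2,uz_2)}_{0}$) contributions, which are exactly the sources of the resonance poles. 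Under the hypothesis $(n,k)\notin\{(1,-1),(2,0)\}$, these are the only non-polynomial pieces, so what remains is a polynomial in the $z_i$.

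\emph{Main obstacle.} The real work sits in step three: one must verify carefully that each non-polynomial factor in the rational form of the $\A$-correlator is indeed matched by an unstable term removed in \eqref{Aplusexp}, and that the apparent denominators $(z_i+1)_{m_i}$ are cancelled by corresponding numerator factors coming from the $\zeta$-expansions. This is essentially a combinatorial bookkeeping argument over the tuples $(m_1,\dots,m_n)$, using the explicit unstable correlators \eqref{1ptce} and \eqref{2ptce} as the base cases whose removal explains every residual pole.
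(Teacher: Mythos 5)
Your symmetry and lower-bound steps are fine and match the paper's (the commutativity of the $\A_k$ for $k\ge 1$ via \eqref{cA2} is exactly what the paper invokes). The gap is in your third step, which is where the whole content of the proposition lives. First, your finiteness claim is false: since $\sum m_i=0$, the $u$-valuations of the factors $\zeta(uz_i)^{m_i}$ cancel rather than accumulate, so infinitely many tuples $(m_1,\dots,m_n)$ contribute to each fixed power of $u$. This is already visible for $n=2$: the tuples $(m,-m)$, $m\ge 0$, all contribute to $\cor{\A(z_1,uz_1)\A(z_2,uz_2)}_0$, and their sum is the infinite geometric series $z_1\sum_{k\ge0}(-1)^k(z_1/z_2)^k$ of \eqref{2ptce}, convergent only on $\Omega$. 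Consequently the coefficient of $u^k$ is not a finite sum of rational functions with transparent denominators, and your "rational function with only resonance and $(z_i+1)_{m_i}$ poles" picture does not get off the ground. Second, even granting that picture, the assertion that the unstable terms removed in \eqref{Aplusexp} account for \emph{all} non-polynomial contributions is precisely the statement to be proved; you defer it to unexecuted "combinatorial bookkeeping." Worse, in \eqref{Aexp} the unstable factors appear multiplied by connected correlators of fewer operators whose polynomiality is only established later (Proposition \ref{Praconpol}) \emph{using} the present proposition, so your route risks circularity.

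The paper's actual argument avoids all of this by a degree count. Using that the $\cE$-correlator vanishes unless $\sum k_i=0$, one may rescale $\cE_k\mapsto u^{-k}\cE_k$ inside every $\A$ (equation \eqref{acor}); after this substitution each term of each $\widetilde{\A}$ is regular at $u=0$ except for a single simple pole coming from $\cE_0$. Expanding $\widetilde{\A}(z_n,uz_n)\big|0\big\rangle$ explicitly then shows that, for each fixed power of $u$, the power of $z_n$ is bounded above; combined with the simple poles in $u$ of the remaining operators this bounds the $z_n$-degree of $\cor{\A_+(z_1,uz_1)\dots\A_+(z_n,uz_n)}_k$ from above, and symmetry plus the lower bound by $1$ in each variable gives polynomiality. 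If you want to complete your write-up, this rescaling-and-degree-bound step is the missing idea you need to supply in place of the finiteness claim.
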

\begin{proof}
From the definition of $\A_+$ it is easy to see that for every $i$ the power of $z_i$ in the series $\displaystyle\cor{\A_+(z_1,uz_1)\dots\A_+(z_n,uz_n)}_k$ is bounded from below by $1$. From \eqref{cA2} it is clear that this series is symmetric in $z_1,\dots,z_n$. Let us prove that, for fixed $k$, the power of $z_n$ in this series is bounded from above, following the proof of Proposition 9 of \cite{OP02}.

Note that 
\begin{equation} \label{re1}
  \cor{\cE_{k_1}(uz_1)\dots \cE_{k_n}(uz_n)} = \cor{\dfrac{\cE_{k_1}(uz_1)}{u^{k_1}}\dots \dfrac{\cE_{k_n}(uz_n)}{u^{k_n}}},
\end{equation}
holds since the correlator vanishes unless $\sum k_i = 0$.

Let us apply this transformation to the correlator of $\A$ operators:
\begin{equation}
\label{acor}
\cor{\A(z_1,uz_1) \dots \A(z_n,u z_n)} = \cor{\widetilde{\A}(z_1,uz_1) \dots \widetilde{\A}(z_{n},u z_{n})} .
\end{equation}
Here $\widetilde{\A}$ stands for the operator $\A$ where the substitution $\cE_k \mapsto u^{-k}\cE_k$ was made.
Note that each term in each $\widetilde{\A}$ is then regular and non-vanishing at $u=0$, except for the term $\dfrac{1}{\zeta(uz)}$ coming from $\cE_0$, which has a simple pole. Let us write the following:
\begin{align}
&\widetilde{\A}(z_n,uz_n)\,\big|0\big\rangle\\ \nonumber
&=\br{\dfrac{\zeta(uz_n)}{uz_n}}^{z_n}\,\sum_{k\in \Z} \dfrac{\zeta(uz_n)^{k}}{(z_n+1)_k} \, \dfrac{\cE_k(uz_n)}{u^k}\big|0\big\rangle\\ \nonumber
&=\br{\dfrac{\zeta(uz_n)}{uz_n}}^{z_n}\sum_{k=0}^{\infty} \dfrac{u^k}{\zeta(uz_n)^{k}} z_n\dots(z_n-k+1) \, \cE_{-k}(uz_n)\big|0\big\rangle\\ \nonumber
&=\br{\dfrac{\zeta(uz_n)}{uz_n}}^{z_n}\sum_{k=0}^{\infty} \br{\dfrac{uz_n}{\zeta(uz_n)}}^k\br{1-\dfrac{1}{z_n}}\dots\br{1-\dfrac{k-1}{z_n}} \, \cE_{-k}(uz_n)\big|0\big\rangle .
\end{align}
It is easy to see that $z_n$ and $u$ enter this expression in such a way that for all terms with a fixed power of $u$ the power of $z_n$ is bounded from above. Since in \eqref{acor} this expression is multiplied by operators $\widetilde{\A}(z_i,uz_i),\, i\in\bs{1,\dots,n-1}$, which have at most simple poles in $u$, the whole correlator \eqref{acor} is bounded from above in powers of $z_n$, for a fixed power of $u$.

From the definition of $\A_+$ it is clear that the fact that the power of $z_n$ in
\begin{equation*}
\cor{\A(z_1,uz_1) \dots \A(z_n,u z_n)}_k
\end{equation*}
is bounded from above for a fixed $k$ immediately implies that the power of $z_n$ in
\begin{equation*}
\cor{\A_+(z_1,uz_1) \dots \A_+(z_n,u z_n)}_k
\end{equation*}
is bounded from above for a fixed $k$ as well.

The symmetricity of $\displaystyle\cor{\A_+(z_1,uz_1)\dots\A_+(z_n,uz_n)}_k$ then implies that for fixed $k$ the power of $z_i$ in this expression is bounded from above for any $i$, which implies that  the series
\begin{equation}
\cor{\A_+(z_1,uz_1)\dots\A_+(z_n,uz_n)}_k
\end{equation}
is polynomial in $z_1,\dots,z_n$, which, in turn (if one takes into account the fact that for all $i$ the power of $z_i$ in the above expression is bounded from below by 1), leads to the fact that the series
\begin{equation}
\dfrac{\cor{\A_+(z_1,uz_1)\dots\A_+(z_n,uz_n)}_k}{z_1 \cdots z_n}
\end{equation}
is polynomial in $z_1,\dots,z_n$.
\end{proof}

\begin{proposition}
\label{Praconpol}
For $(n,k)\notin\bs{(1,-1),(2,0)}$ the series
\begin{equation}
\displaystyle\dfrac{\ccor{\A(z_1,uz_1)\dots\A(z_n,uz_n)}_k}{z_1 \cdots z_n}
\end{equation}
is a symmetric polynomial in $z_1,\dots,z_n$.
\end{proposition}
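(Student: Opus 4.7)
The plan is to induct on $n$, using the identity~\eqref{Aplusexp} from the preceding proposition as the main bridge. Among the stable tableaux $y\in\widetilde{\mathcal{Y}}_{n,k}$ there is exactly one consisting of a single row of length $n$ with row label $k$, and its contribution to the right-hand side of~\eqref{Aplusexp} is exactly $\ccor{\A(z_1,uz_1)\dots\A(z_n,uz_n)}_k$. Isolating this term yields
\[
\ccor{\A(z_1,uz_1)\dots\A(z_n,uz_n)}_k = \cor{\A_+(z_1,uz_1)\dots\A_+(z_n,uz_n)}_k - \Sigma_{n,k},
\]
where $\Sigma_{n,k}$ is the sum over tableaux $y\in\widetilde{\mathcal{Y}}_{n,k}$ with $h(y)\ge 2$ of the corresponding products $\prod_i \ccor{\A(\dots)\dots\A(\dots)}_{\lambda_i(y)}$ of connected correlators.

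The base case $n=1$ with $k\ne -1$ is handled directly: connected and disconnected one-point correlators coincide, and a direct computation from~\eqref{defA} shows that the non-positive powers of $z$ in $\cor{\A(z,uz)}_k$ vanish away from $k=-1$, so $\cor{\A(z,uz)}_k=\cor{\A_+(z,uz)}_k$, which divided by $z$ is polynomial by Proposition~\ref{Aplusprop}. For the inductive step ($n\ge 2$), each factor $\ccor{\A(\dots)\dots\A(\dots)}_{\lambda_i(y)}$ appearing in $\Sigma_{n,k}$ involves $1\le p_i<n$ operators and a label $\lambda_i$ satisfying $(p_i,\lambda_i)\notin\{(1,-1),(2,0)\}$, by the very definition of $\widetilde{\mathcal{Y}}_{n,k}$. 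The induction hypothesis therefore applies to every such factor, so each one divided by the product of its $z$-variables is polynomial, and consequently $\Sigma_{n,k}/(z_1\cdots z_n)$ is polynomial in $z_1,\dots,z_n$. By Proposition~\ref{Aplusprop}, $\cor{\A_+(z_1,uz_1)\dots\A_+(z_n,uz_n)}_k/(z_1\cdots z_n)$ is also polynomial, and this is precisely where the hypothesis $(n,k)\notin\{(1,-1),(2,0)\}$ is used; subtracting the two polynomial terms gives the desired polynomiality of $\ccor{\A(z_1,uz_1)\dots\A(z_n,uz_n)}_k/(z_1\cdots z_n)$.

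Symmetry is automatic. The left-hand side of the isolated identity is symmetric by Proposition~\ref{Aplusprop}, and $\Sigma_{n,k}$ is symmetric in $z_1,\dots,z_n$ because permutations of $\{1,\dots,n\}$ permute the multi-row stable tableaux among themselves; hence the connected correlator inherits symmetry.

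The only real obstacle is bookkeeping: one must confirm that the stability condition defining $\widetilde{\mathcal{Y}}_{n,k}$ is exactly the condition needed for the induction hypothesis to apply to every factor of every product contributing to $\Sigma_{n,k}$, so that no unstable $(1,-1)$ or $(2,0)$ connected correlator ever appears. This matching is precisely what the previous proposition was designed to provide, so nothing more delicate is required.
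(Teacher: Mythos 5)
Your proposal is correct and follows essentially the same route as the paper: both induct on $n$, isolate the single-row term in~\eqref{Aplusexp} to express $\ccor{\A(z_1,uz_1)\dots\A(z_n,uz_n)}_k$ as the $\A_+$-correlator minus a sum over multi-row stable tableaux, apply Proposition~\ref{Aplusprop} to the former and the induction hypothesis to the factors of the latter, with the stability condition guaranteeing no $(1,-1)$ or $(2,0)$ factor appears. The extra details you supply on the base case and on symmetry are consistent with, and slightly more explicit than, the paper's own argument.
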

\begin{proof}
Let us prove the statement of this proposition by induction in $n$, the number of operators in the correlator. It is clear that for $n=1$ the statement holds. Suppose that it holds for any number of operators less than $n$. We will prove that it then holds for $n$ operators as well.

Formula \eqref{Aplusexp} can be rewritten as
\begin{align}
\label{Aconexp}
 &\dfrac{\ccor{\A(z_1,uz_1)\dots\A(z_n,uz_n)}_k}{z_1 \cdots z_n} =\dfrac{\cor{\A_+(z_1,uz_1)\dots\A_+(z_n,uz_n)}_k}{z_1 \cdots z_n}\\ \nonumber
 &-\sum_{y \in \widetilde{\mathcal{Y}}'_{n,k}}\prod_{i=1}^{h(y)}\dfrac{\ccor{\A(z_{c_{i,1}(y)},uz_{c_{i,1}(y)})\dots\A(z_{c_{i,l_i(y)}(y)},uz_{c_{i,l_i(y)}(y)})}_{\lambda_i(y)}}{z_{c_{i,1}(y)} \cdots z_{c_{i,l_i(y)}(y)}} .
\end{align}
Here, naturally, $\widetilde{\mathcal{Y}}'_{n,k}$ is equal to $\widetilde{\mathcal{Y}}_{n,k}$ with the single-row Young tableau thrown away.

By Proposition \ref{Aplusprop}, the first term on the right hand side of \eqref{Aconexp} is polynomial in $z_1,\dots,z_n$. By induction hypothesis, all the terms in the sum on the right hand side of \eqref{Aconexp} are polynomial as well, since they are finite products of connected correlators of the lower number of operators (and, by definition of $\widetilde{\mathcal{Y}}'_{n,k}$, correlators with $(n_i,k_i) \in \bs{(1,-1),(2,0)}$ never appear). This implies the statement of the proposition.
\end{proof}

Taking into account formula \eqref{hconexpr}, we see that Proposition \ref{Praconpol} directly implies the statement of Theorem \ref{ThPol}.



\section{Proof of the Bouchard-Mari\~no conjecture}
\label{secBM}

In the present section we give a new proof of the Bouchard-Mari\~no conjecture using the polynomiality result from the previous section and not using the ELSV formula.

This conjecture was already proved in~\cite{BEMS} and~\cite{EMS09}. The first of these papers provides a ``physical'' proof through the study of the corresponding matrix model. Unfortunately, we were not able to attribute precise mathematical meaning to all of the statements of that paper (see \cite{SSZ13} for a related discussion). In the second paper the Bouchard-Mari\~no formula is derived directly from the known cut-and-join recursion relation for Hurwitz numbers, with the help of the ELSV formula. 

Here we follow the ideas of the proof of~\cite{EMS09} presenting them in a simplified way, with one essential modification: we do not use the ELSV formula in this proof, using instead just the polynomiality property.

\subsection{Generating function for Hurwitz numbers}

Let us introduce the generating function for the connected Hurwitz numbers $\ch_{g;\mu}$ in the following way:
\begin{equation}
\cH_{g,n} := \sum_{\mu_1,\dots,\mu_n \in \bs{1,2,\dots}} \dfrac{\ch_{g;\mu_1,\dots,\mu_n}}{\bn(g,\mu)!}\,x_1^{\mu_1}\dots_n^{\mu_n} .
\end{equation}
Theorem \ref{ThPol} implies that, for $(g,n) \notin \bs{(0,1),(0,2)}$,
\begin{equation}
\label{hurwcoefexpr}
\cH_{g,n} = \sum_{\substack{k_1,\dots,k_n\in\\ \bs{0,1,\dots,K_{g,n}}}} c_{k_1\dots k_n} \prod_{i=1}^n\sum_{\mu_i=1}^{\infty}\dfrac{\mu_i^{\mu_i+k_i}}{\mu_i!}x_i^{\mu_i},
\end{equation}
where $c_{k_1\dots k_n}$ are the coefficients of the polynomials $P_{g,n}$ from Theorem \ref{ThPol}, and $K_{g,n}$ is the highest power appearing in $P_{g,n}$.

Define
\begin{equation}
\xv_k(x) := \sum_{m=1}^{\infty}\dfrac{m^{m+k}}{m!}x^{m} .
\end{equation}
Now we can rewrite \eqref{hurwcoefexpr} as
\begin{equation}
\cH_{g,n} = \sum_{\substack{k_1,\dots,k_n\in\\ \bs{0,1,\dots,K_{g,n}}}} c_{k_1\dots k_n} \prod_{i=1}^n\xv_{k_i}(x_i) .
\end{equation}

Consider the following change of variables:
\begin{equation}
\label{xt}
x_i=\Bigl(1+\frac1{t_i}\Bigr)\,e^{-1-\frac1{t_i}} .
\end{equation}
We see that the generating function $H_{g,n}$ is a polynomial in variables $t_i$ (in all but two
`unstable' cases when $g=0$ and $n\le2$) after the above substitution (we treat this substitution as a power series expansion at the point $t_i=-1$). For the unstable cases we have
\begin{equation}
\begin{aligned}
\cH_{0,1}&=\sum_{a=1}^\infty\frac{a^{a-2}}{a!}x_1^a=\xv_{-2}(x_1)=
\frac12-\frac{1}{2t_1^2},\\
\cH_{0,2}&=\sum_{a,b}\frac{a^a}{a!}\frac{b^b}{b!}\frac{x_1^ax_2^b}{a+b}
 =\log\left(\frac{\frac1{t_2+1}-\frac1{t_1+1}}{\frac1{x_1}-\frac1{x_2}}\right) .
\end{aligned}\label{H0102}
\end{equation}

The formula of Bouchard and Mari\~no is a
recursion relation for these polynomials. In order to present it
in a more closed form it is convenient to introduce another family of
polynomials $W_{g,n}(t_1,\dots,t_n)$ obtained by the above
substitution from the series
$$\Bigl(\prod x_k\d_{x_k}\Bigr)\cH_{g,n}=
 \sum_{\mu_1,\dots,\mu_n}
 \frac{\ch_{g;\mu_1,\dots,\mu_n}}{\bn(g,\mu)!}
     \;\mu_1\dots\mu_n\,x_1^{\mu_1}\dots x_n^{\mu_n},
$$
i.~e., for $(g,n)\notin\bs{(0,1),(0,2)}$,
\begin{equation}
\label{Wset}
W_{g,n}(t_1,\dots,t_n)=\sum_{\substack{k_1,\dots,k_n\in\\ \bs{0,1,\dots,K_{g,n}}}} c_{k_1\dots k_n}
 \prod_{i=1}^n\xv_{k_i+1}(t_i).
\end{equation}
In the unstable cases we define the functions $W_{g,n}$ by setting explicitly
\begin{align}
\label{wunst1}
 W_{0,1}(t_1)&=0,\\ \label{wunst2}
 W_{0,2}(t_1,t_2)&=
 \frac{t_1^2(t_1+1)t_2^2(t_2+1)}{(t_2-t_1)^2} .
 \end{align}

Define also auxiliary functions $\tW_{g,n}(u,v;t_2,\dots,t_n)$ by
\begin{align*}
 \tW_{g,n}(u,v;t_{L'}):= & W_{g-1,n+1}(u,v,t_{L'}) \\ & +\sum_{g_1+g_2=g}
  \sum_{A\sqcup B=L'}W_{g_1,|A|+1}(u,t_A)W_{g_2,|B|+1}(v,t_B) .
\end{align*}
We denote here by $L'=\{2,\dots,n\}$ the index set,
$t_{L'}=(t_2,\dots,t_n)$; the summation is taken over the set of
all possible partitions of the index set into a disjoint union of
two subsets, $A$ and~$B$.

 \begin{theorem}[Bouchard-Mari\~no conjecture]\label{th1}
 The polynomials $W_{g,n}$ can be determined by the either of the
 following recursive formulas
 \begin{align*}
 W_{g,n}(t_1,t_{L'})=&-\res_{z=0}\Bigl(K(z,t_1)\,
     \tW_{g,n}\bigl(\frac1z,\frac1z;t_{L'}\bigr)\Bigr)\\
  =&\res_{z=0}\Bigl(K(z,t_1)\,
     \tW_{g,n}\bigl(\frac1z,\frac1{\si(z)};t_{L'}\bigr)\Bigr)\\
=&-\res_{z=0}\Bigl(K(z,t_1)\,
     \tW_{g,n}\bigl(\frac1{\si(z)},\frac1{\si(z)};t_{L'}\bigr)\Bigr)
  \end{align*}
 where
 $$K(z,t_1)=\frac{t_1^2(1+t_1)}{2(1-z\,t_1)(1-\si(z)\,t_1)}\;\frac{z\,dz}{z+1}$$
 and the series $\si(z)=-z+\frac{2}{3}z^2-\frac{4}{9}z^3+\dots$ is
 defined in the next subsection.
 \end{theorem}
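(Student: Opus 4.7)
The plan is to adapt the Eynard--Mulase--Safnuk proof \cite{EMS09}, whose only input beyond the cut-and-join equation was the ELSV formula, used exclusively to establish polynomiality of $W_{g,n}$ in the $t_i$. Since Theorem \ref{ThPol} gives this polynomiality directly, the strategy of \cite{EMS09} goes through without any appeal to ELSV.

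First I would recall the cut-and-join equation, a linear relation expressing $\bn(g,\mu)\,\ch_{g;\mu}$ as a sum over splittings and joinings of the parts of $\mu$; this follows from \eqref{jkk} by commuting one factor $\mathcal{F}_2$ through the $\alpha_{-\mu_i}$. Summing against monomials $\prod x_i^{\mu_i}$, it becomes a PDE for $\cH_{g,n}$, which translates under the substitution \eqref{xt} into an equation relating $W_{g,n}(t_1,t_{L'})$ to the combination $\tW_{g,n}(t_1,t_1;t_{L'})$, modulo corrections involving the unstable pieces $\cH_{0,1},\cH_{0,2}$. Here one uses that the operator $x\partial_x$ becomes $t^2(t+1)\partial_t$ in the new variable, and that the basic series $\xv_k(x)$ therefore satisfies the recursion $\xv_{k+1}=t^2(t+1)\partial_t\,\xv_k$, making each $\xv_k$ with $k\ge 1$ a polynomial in $t$.

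Next, Theorem \ref{ThPol}, combined with the explicit formulas \eqref{H0102}, \eqref{wunst1}, \eqref{wunst2} for the unstable cases, determines the full pole structure in $t_1$ of every term in the transformed cut-and-join equation: $W_{g,n}$ with $(g,n)\neq(0,1),(0,2)$ is a polynomial in $t_1,\dots,t_n$, while $W_{0,2}(t_1,t_2)$ has its only singularity in $t_1$ on the diagonal $t_1=t_2$. The final step is to reinterpret the transformed equation as a residue. Viewing the equation in a local variable $z$ near the branch point $y=1$ (i.e., $t=\infty$) of the spectral curve $x=ye^{-y}$, the local deck involution of the double cover $y\mapsto x(y)$ is the series $\si$ defined in the next subsection. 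The kernel $K(z,t_1)$ is precisely the inverse of the differential operator acting on $W_{g,n}(t_1,t_{L'})$ on the LHS. Because $W_{g,n}$ is polynomial in $t_1$, the poles of $K(z,t_1)\,\tW_{g,n}(1/z,1/z;t_{L'})$ away from $z=0$ contribute zero after a contour deformation, reducing the LHS to the claimed residue at $z=0$. The second and third formulas then follow from the first via the involution symmetry $K(z,t_1)+K(\si(z),t_1)=0$ together with the $u\leftrightarrow v$ symmetry of $\tW_{g,n}$.

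The main obstacle is the residue interpretation in the last step: one must check that $K(z,t_1)$ inverts the LHS differential operator correctly, and that the unstable contributions from $W_{0,1}$ and the diagonal of $W_{0,2}$, which appear implicitly in $\tW_{g,n}$, combine with the genuinely bilinear part to match the cut-and-join equation exactly. This is essentially bookkeeping once polynomiality is available, but the cancellations among unstable corrections, polar parts at $z=1/t_1$ and $z=\si^{-1}(1/t_1)$, and the principal residue at $z=0$ are delicate and need to be tracked carefully; this is the step where our polynomiality input from Theorem \ref{ThPol} replaces the combinatorial information that \cite{EMS09} extracted from the ELSV formula.
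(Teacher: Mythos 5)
Your overall strategy coincides with the paper's: it follows \cite{EMS09}, translating the cut-and-join equation into $t$-coordinates and replacing the ELSV input by the polynomiality of Theorem~\ref{ThPol}. However, two genuine gaps remain. First, polynomiality alone is not enough to run the symmetrization/residue step: the indispensable extra ingredient is Lemma~\ref{lem2}, the statement that the principal part of the pole of each $\xv_k$ (hence of $D_1\cH_{g,n}$ and of $W_{g,n}$) at the branch point $P$ is \emph{odd} under the deck involution $\si$. This parity is what makes the projection $f\mapsto\odd{2f/\eta}$ annihilate the terms $(2g-2+n)\cH_{g,n}$ and all summands with index $k\ne 1$, and what yields the key identity $\odd{-\tfrac{2}{\eta(t_1)t_1}D_1\cH_{g,n}}=D_1\cH_{g,n}$ recovering the left-hand side; your proposal never establishes or even invokes it, and your ``contour deformation'' argument cannot substitute for it, since $\si$ and hence $K(z,t_1)$ are only defined as germs near $z=0$, so there is no global contour to deform.

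Second, the symmetry you use to pass between the three residue formulas is wrong: one has $K(\si(z),t_1)=K(z,t_1)$ as a $1$-form (this follows from $\frac{z\,dz}{1+z}=\frac{\si(z)\,d\si(z)}{1+\si(z)}$, a consequence of~\eqref{s(z)}), not $K(z,t_1)+K(\si(z),t_1)=0$. With $K$ invariant and $\tW_{g,n}$ symmetric in $(u,v)$, your argument would make the three expressions equal without the alternating signs. The actual equivalence (Remark~\ref{rem4} in the paper) again rests on the oddness of the polar parts: for $f$ with odd principal part one has $\odd{f(t_1)g(t_1)/\eta}+\odd{f(t_1)g(\Si(t_1))/\eta}=0$, which is what produces the sign flips. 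So both defects trace back to the same missing lemma; once you prove the parity statement for the $\xv_k$ (a short induction using that $D=t^2(t+1)\partial_t$ commutes with $\si$), the rest of your outline matches the paper's proof.
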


The second equality is a reformulation of the Bouchard-Mari\~no
conjecture. Experiments show, however, that the first formula
is more efficient for practical computations.

Analytically, the meaning of this theorem is as follows. The
function $H_{g,n}$ is defined originally as a formal power series
expansion at $x_i=0$. It turns out, however, that this series has
a finite radius of convergence with respect to each variable $x_i$
(to be precise, the radius of convergence is $e^{-1}$). An attempt
to extend it beyond the radius of convergence meets
difficulties: the function becomes multi-valued with ramification
at $x_i=e^{-1}$. Therefore, it is more natural to
consider~$H_{g,n}$ as a function on the product
$C\times\dots\times C$ where $C$ is the curve given by the
equation $x=\Bigl(1+\frac1{t}\Bigr)\,e^{-1-\frac1{t}}$. When
treated in this way, it becomes single-valued and even rational. The
recursive relation of the theorem is formulated in terms of the
analysis of the behavior of the function $H_{g,n}$ (and closely
related to it function $W_{g,n}$) in a neighborhood of the
ramification point $x_1=e^{-1}$ which is different from the
origin.

\subsection{The Lambert curve}

The \emph{Lambert curve} is a curve in~$\C^2$ defined by the equation
\begin{equation}
x=y\,e^{-y}.
 \label{lam}\end{equation}
We consider this affine curve as an open part of its
compactification~$C=\C P^1$. We regard~$y$ as a rational
coordinate on~$C$ and the projection to the~$x$-line as a
holomorphic function with an essential singularity at the point
$y=\infty$. In addition to~$y$ we use other convenient
rational coordinates on~$C$. In particular, we keep the
notations~$z$ and~$t$ for the rational coordinates related to~$y$
by
$$y=1+z=1+\frac1t,\quad t=\frac1z.$$

There are two points on~$C$ of special interest for us: the
\emph{origin}~$O$ corresponding to the coordinates~$y=0$, $z=-1$,
$t=-1$, and the \emph{branching point}~$P$ with the
coordinates~$y=1$, $z=0$, $t=\infty$. The point~$P$ is a 
Morse critical point for the function~$x$. It means that the projection to
the $x$-line considered as a branched cover has ramification of
order two at~$P$.

Consider also the function $w=\log x$. It is multi-valued, however, its differential is a well-defined meromorphic differential on~$C$,
\begin{equation}
dw=\frac{dx}{x}=\frac{1-y}{y}dy=-\frac{z}{z+1}dz=\frac{dt}{t^2(t+1)}.
 \label{dw}\end{equation}
Denote also by~$D$ the vector field dual to this $1$-form,
\begin{equation}
D=x\d_x=\frac{y}{1-y}\d_{y}=-\frac{z+1}{z}\d_{z}
 =t^2(t+1)\d_{t}.
 \label{D}
 \end{equation}

We regard~\eqref{dw} and~\eqref{D} as a single meromorphic form and a single vector
field on~$C$ respectively, whose coordinate presentation depends
on the chosen local coordinate. Remark that the form $dw$ vanishes
at ~$P$, while the field~$D$ has a simple pole at this point.

The inversion of~\eqref{lam} near the origin is given~\cite{CGHJK96,DMSS12} by the
expansion
$$y=\sum_{\mu=1}^\infty \frac{\mu^{\mu-1}}{\mu!} x^\mu.$$
It follows from~\eqref{D} that for any integer~$k$ the series
$$\xv_k=\sum_{\mu=1}^\infty \frac{\mu^{\mu+k}}{\mu!} x^\mu=D^{k+1}y$$
is a rational function on~$C$. More explicitly, in the $t$-coordinate
it is given for~$k\ge0$ by the recursion
$$\xv_0(t)=-1-t,\qquad
 \xv_{k+1}(t)=t^2(t+1)\frac{d}{dt}(\xv_k(t)).$$
It is a polynomial in~$t$:
$$\xv_k(t)=-k!\,t^{k+1}-\dots-(2k-1)!!\,t^{2k+1}.$$
The degree of this polynomial is~$2k+1$. Equivalently, one can say
that~$\xv_k$ considered as a meromorphic function on~$C$ has pole
of order~$2k+1$ at~$P$. It follows that the linear span of the
polynomials~$\xv_k$ form a subspace of `approximately half'
dimension in the space of all polynomials in~$t$. This subspace
has a nice characterization that we describe now.

Denote by~$\si$ the involution interchanging the sheets of the
ramification defined by the function~$x$ near the point~$P$. The
function $\si$ is holomorphic in a neighborhood of~$P$ and its
Taylor expansion can be computed from the equation
\begin{equation}
(1+z)\,e^{-z}=(1+\si(z))\,e^{-\si(z)}.
 \label{s(z)}\end{equation}
Here are the first few terms of this expansion written in the
coordinates~$z$ and~$t$, respectively:
\begin{align*}
 \si(z)&=-z+\frac{2}{3}z^2-\frac{4}{9}z^3
   +\frac{44}{135}z^4-\frac{104}{405}x^5+\frac{40}{189}z^6+\dots\\
 \Si(t)&=\frac1{\si(1/t)}=-t-\frac{2}{3}-\frac{4}{135t^2}
   +\frac{8}{405t^3}-\frac{8}{567t^4}+\dots
\end{align*}

\begin{lemma}\label{lem2}
For any $k\ge0$ the principal part of the pole of $\xv_k(t)$ at
the point~$P$ is odd with respect to the involution~$\si$. In other
words, the function $\xv_k(t)+\xv_k(\Si(t))$ is holomorphic at~$P$.
\end{lemma}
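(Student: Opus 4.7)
The plan is to exploit the intrinsic description $\xv_k = D^{k+1} y$ together with the fact that the vector field $D = x\partial_x$ is canonically attached to the function $x$ on $C$. Since the involution $\sigma$ is by construction the deck transformation of the branched cover $x\colon C \to \C$ near $P$, we have $\sigma^{*}x = x$ locally, and hence $D$ is $\sigma$-invariant as a vector field. Therefore $\sigma$ commutes with the action of $D$ on meromorphic germs at $P$:
\begin{equation*}
(Df)\circ\sigma = D(f\circ\sigma).
\end{equation*}
Iterating gives the crucial identity
\begin{equation*}
\xv_k + \xv_k\circ\sigma \;=\; D^{k+1}\bigl(y + y\circ\sigma\bigr).
\end{equation*}

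The function $y + y\circ\sigma$ is manifestly $\sigma$-invariant, and it is also holomorphic at $P$, because $y = 1+z$ is holomorphic and $\sigma$ is a local biholomorphism fixing $P$. The lemma will therefore follow once I show that $D$ preserves the subring of $\sigma$-invariant holomorphic germs at $P$, and then induct on $k$.

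To prove this local statement, I would observe that the $\sigma$-invariant holomorphic germs at $P$ coincide with holomorphic functions of $x$. Indeed, in the $\sigma$-antisymmetric coordinate $w$ (chosen so that $w^2$ is a constant multiple of $x - e^{-1}$, which exists because $P$ is a Morse critical point of $x$), every $\sigma$-invariant holomorphic germ is a power series in $w^2$, hence a power series in $x - e^{-1}$. Given $f = g(x)$ with $g$ holomorphic at $e^{-1}$, we have $Df = x\,g'(x)$, which again has the same form. By induction $D^{k+1}(y + y\circ\sigma)$ is holomorphic at $P$, proving the lemma.

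The only mildly delicate point is the local characterization of $\sigma$-invariant holomorphic germs as pullbacks from the $x$-line; this rests on the Morse condition on $x$ at $P$, which is already recorded in the paper. Everything else is a formal consequence of the commutation $[\sigma, D] = 0$, so I do not expect substantive obstacles.
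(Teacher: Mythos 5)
Your proof is correct and follows essentially the same route as the paper: both arguments are an induction on $k$ resting on the $\sigma$-invariance of the field $D$ and on the fact that $D$ maps even holomorphic germs at $P$ to even holomorphic germs, applied to the even part of $\xv_k$. The only difference is in how that last fact is justified --- the paper observes that $D$ has a simple pole at $P$, so $D(F_k)$ is even with pole order at most $1$ and hence holomorphic, whereas you identify $\sigma$-invariant holomorphic germs with holomorphic functions of $x$ and note that $D(g(x))=x\,g'(x)$; both justifications rest on the Morse condition at $P$.
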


\begin{proof} For $k=0$ the assertion is obvious since the
principal part of any simple pole is odd. Now, arguing by
induction, we assume that $\xv_k$ is represented in the form
$$\xv_k(t)=\eta_k(t)+F_k(t)$$
where $\eta_k(t)=\frac12\bigl(\xv_k(t)-\xv_k(\Si(t))\bigr)$ is odd
and $F_k(t)=\frac12\bigl(\xv_k(t)+\xv_k(\Si(t))\bigr)$ is even and
holomorphic at~$P$. Then, by definition,
$$\xv_{k+1}(t)=D(\xv_k(t))=D(\eta_k(t))+D(F_k(t)).$$
The field~$D$ is invariant with respect to the involution,
therefore, it preserves the parity. It follows that~$D(\eta_k(t))$
is odd, and $D(F_k(t))$ is even and the order of its pole at~$P$
is at most~$1$. It follows that $D(F_k(t))$ is, in fact,
holomorphic at~$P$, which proves the lemma.
\end{proof}

\subsection{The cut-and-join equation}

Yet another way to collect Hurwitz numbers into a generating
series is given by the expansion
$$
 G_{g,n}(p_1,p_2,\dots)=\frac1{n!}\sum_{\mu_1,\dots,\mu_n}
 \frac{\ch_{g;\mu_1,\dots,\mu_n}}{\bn(g,\mu)!}
     \;p_{\mu_1}\dots p_{\mu_n}.
$$

The series $G_{g,n}$ involves an infinite collection of variables
$p_1,p_2,\dots$ and all its terms are homogeneous of degree~$n$.
The relation between the two series $G_{g,n}$ and $\cH_{g,n}$ is
obvious. In particular, $G_{g,n}$ can be obtained from
$\frac1{n!}\cH_{g,n}$ by replacing every monomial $x_1^{\mu_1}\dots
x_n^{\mu_n}$ by the corresponding monomial $p_{\mu_1}\dots
p_{\mu_n}$.

The cut-and-join equation is a recursion on Hurwitz numbers obtained through the analysis of the cyclic type of the result of 
multiplication of a given permutation by a single transposition.
In its original form~\cite{GJ}, it is written as
$$2u\pd{e^G}{u}+\sum_{i=1}^\infty (i+1)\,p_i\pd{e^G}{p_i}=
  \frac12\sum_{a,b}\Bigl((a+b)p_ap_b\pd{e^G}{p_{a+b}}
   +u\,ab\,p_{a+b}\pd{^2e^G}{p_a\D p_b}\Bigr)$$
where
$$G=\sum_{g,n}u^{g-1}G_{g,n}.$$
The same equation written in terms of the individual components
$G_{g,n}$ is
\begin{align}
\label{cutjoinp}
& (2g-2+n)G_{g,n}+\sum_{i=1}^\infty
 i\,p_i\pd{G_{g,n}}{p_i}
 \\ \notag &
 =
  \frac12\sum_{a,b}\Bigl((a+b)p_ap_b\pd{G_{g,n-1}}{p_{a+b}}
  \\ \notag &
  \phantom{ = }\ 
  +ab\,p_{a+b}\Bigl(\pd{^2G_{g-1,n+1}}{p_a\D p_b}
   +\sum_{\substack{g_1+g_2=g\\n_1+n_2=n+1}}
   \pd{G_{g_1,n_1}}{p_a}\pd{G_{g_2,n_2}}{p_b}\Bigr)\Bigr).
  \end{align}

Let us rewrite this equation in terms of the functions $\cH_{g,n}$.
The operator $\sum i\,p_i\d_{p_i}$ from the left hand side of the
equation corresponds to the operator $\sum_{i=1}^n D_i$ acting
on $\cH_{g,n}$ where
$$D_i=x_i\d_{x_i}=t_i^2(t_i+1)\d_{t_i}.$$
The action of the `cut' operator $\sum(a+b)p_ap_b\D_{p_{a+b}}$ in
terms of the series $\cH_{g,n}$ results in the replacement of any
monomial $x_m^\ell$ by the sum
\begin{align*}
\sum_{a+b=\ell}(a+b)x_j^ax_k^b
 & =\ell\frac{x_kx_j(x_k^{\ell-1}-x_j^{\ell-1})}{x_k-x_j}
\\
 & =\frac{x_j}{x_k-x_j}x_k\pd{(x_k^\ell)}{x_k}
    +\frac{x_k}{x_j-x_k}x_j\pd{(x_j^\ell)}{x_j}
\\ 
 & =\frac{x_j}{x_k-x_j}D_k(x_k^\ell)+\frac{x_k}{x_j-x_k}D_j(x_j^\ell).
\end{align*}

In a similar way, the action of the `join' operator
$ab\,p_{a+b}\pd{^2}{p_a\D p_b}$ results in the replacement of any
monomial $x_j^a x_k^b$ by the monomial
$$ ab\,x_m^{a+b}=\Bigl(x_m\pd{(x_m^a)}{x_m}\Bigr)
 \Bigl(x_m\pd{(x_m^b)}{x_m}\Bigr)
 =D_m(x_m^a)D_m(x_m^b).$$
The relation between the indices $k,j$, and $m$ in the above
considerations is not essential. One should only take care that
the result is symmetric with respect to the permutations of the
variables $x_1,\dots,x_n$.

The relation obtained from~\eqref{cutjoinp} in this way is
presented below. In this relation $L$ denotes the collection of
indices $L=\{1,2,\dots,n\}$, and $t_L=(t_1,\dots,t_n)$.
\begin{align}\label{cutjoint1}
& (2g{-}2{+}n) \cH_{g,n}(t_L)+\sum_{k=1}^nD_k\cH_{g,n}(t_L)
\\ \notag &
=
 \frac12\sum_{k\ne j}2
  \frac{x_j}{x_k-x_j}D_k\cH_{g,n-1}(t_{L\backslash\{j\}})
\\ \notag &
\phantom{ = }\ +\frac12\sum_{k=1}^n\Bigl(D_kD_{n+1}\cH_{g-1,n+1}(t_L,t_{n+1})\bigm|_{t_{n+1}=t_k}
\\ \notag &
\phantom{ = }\ +\sum_{g_1+g_2=g}\sum_{A\sqcup B=L\setminus\{k\}}D_k\cH_{g_1,|A|+1}(t_k,t_A)
    D_k\cH_{g_2,|B|+1}(t_k,t_B)\Bigr),
\end{align}
where the last summation is taken over the set of all possible
partitions of the index set
$L\setminus\{k\}=\{1,\dots,k-1,k+1,\dots,n\}$ into a disjoint
union of two subsets, $A$ and~$B$.

This relation can be regarded as a relation on the functions in
either $x$ or $t$-variables, where $x_i$ and $t_i$ are related
by~\eqref{xt}. We consider this relation as the `preliminary form'
of the required cut-and-join equation. The final form is obtained
by extracting unstable terms from the last summation corresponding
to the functions $\cH_{0,1}$ and $\cH_{0,2}$ and combining these terms
with the corresponding terms of the previous sums.
Using~\eqref{H0102}, we find the coefficients of the recombined
terms
\begin{align*}
1-D_1\cH_{0,1}(t_1)&=-\frac1{t_1},\\
\frac{x_2}{x_1-x_2}+D_1\cH_{0,2}(t_1,t_2)&=\frac{t_1^2(1+t_2)}{t_1-t_2} .
\end{align*}

We obtain thus the final form of the cut-and-join equation in the
$t$-coordinates, see more details in~\cite{MZ}:
\begin{align}\label{cutjoint2}
& 
(2g{-}2{+}n)\cH_{g,n}(t_L)+\sum_{k=1}^n\Bigl(-\frac{1}{t_k}\Bigr)D_k\cH_{g,n}(t_L)
\\ \notag &
= \sum_{k\ne j}
  \frac{t_k^2(1+t_j)}{t_k-t_j}D_k\cH_{g,n-1}(t_{L\backslash\{j\}})
\\ \notag & \phantom{ = }\
 +\frac12\sum_{k=1}^n\Bigl(D_kD_{n+1}\cH_{g-1,n+1}(t_L,t_{n+1})\bigm|_{t_{n+1}=t_k}
\\ \notag & \phantom{ = }\
 +\sum_{g_1+g_2=g}\sum_{A\sqcup B=L\setminus\{k\}}^{\rm stable}
   D_k\cH_{g_1,|A|+1}(t_k,t_A)D_k\cH_{g_2,|B|+1}(t_k,t_B)\Bigr) .
\end{align}

It is remarkable that the `non-polynomial' summands are canceled
out, and both sides of the relation proved to be polynomial in
$t$-variables. As it is pointed out in~\cite{MZ}, selecting the
highest and the lowest degree terms of this formula one gets
immediately the Virasoro constrains for the intersection numbers
of $\psi$-classes on the moduli spaces of curves (see e.g. \cite{D,K,W,MMM92,KMMMZ}) and
the relation of the $\l_g$-formula~\cite{FP1,FP2}, respectively.

\subsection{Reduction by symmetrization}

The cut-and-join equation~\eqref{cutjoint2} can be
used to determine $\cH_{g,n}$ inductively. However, in the presented
form it is not very convenient since it is not clear how to invert
the operator on the left hand side of the equation. It is not even
obvious that the function~$\cH_{g,n}$ obtained by this recursion is
polynomial in $t$-variables. The following two key observations
of~\cite{EMS09} lead to a considerable simplification
of~\eqref{cutjoint2}: 
\begin{enumerate}
\item The function $\cH_{g,n}$ is polynomial in
each variable~$t_i$, therefore, the whole information about this
function is contained in the principal part of its pole at the
point~$P$ with respect to~$t_i$. Let us stress that in \cite{EMS09} this polynomiality was derived from the ELSV formula, while in the present paper we have it independently due to the results of Section \ref{sec:polynomiality}, as noted above.
\item The principal part of the
pole of~$H_{g,n}$ is odd with respect to the involution~$\si$ on
each $t_i$-line (as it follows from Lemma~\ref{lem2}).
\end{enumerate}

Consider the \emph{even} summand of the principal part of the pole
at~$P$ of each term in~\eqref{cutjoint2} \emph{with respect to the
first variable~$t_1$}. It follows that most of the terms will give
trivial contribution to the result so that the whole equation will
be considerably simplified.

It is more convenient for us to use a slight modification of this
idea. Namely, set
$$\eta(t_1)=\si\Bigl(\frac1{t_1}\Bigr)-\frac1{t_1}.$$
This function is holomorphic at~$P$ and odd with respect to the
involution. Now, for any meromorphic function $f(t_1)$ we denote
by
$$\odd{\frac{f(t_1)}{\eta(t_1)}}$$
the \emph{odd residueless principal part} of the pole of the
quotient $f/\eta$ at the point~$P$. More explicitly, if we write
the Laurent expansion
$$\frac{f(t_1)+f(\Si(t_1))}{2\,\eta(t_1)}=
   \sum_{-\infty<i\le N} a_i\,t_1^{i}$$
at~$P$, then we set, by definition,
$$\odd{f/\eta}= \sum_{i=2}^{N} a_i\,t_1^{i}.$$
From this definition we see that $\odd{f/\eta}$ is
a polynomial in~$t_1$ divisible by~$t_1^2$.

We apply the transformation $f(t_1)\mapsto \odd{2f/\eta}$ to both
sides of~\eqref{cutjoint2}. This transformation annihilates any
function in~$t_1$ whose pole at $P$ has odd principal part. In
particular, it annihilates $\cH_{g,n}(t_L)$ on the left hand side as
well as all terms on both sides of the equality corresponding to
the summation index $k$ different from~$1$.

Let us compute the action of this transformation on the therm
$-\frac{1}{t_1}D_1\cH_{g,n}$ on the left hand side. For any
meromorphic function $f(t_1)$ which is odd with respect to the
involution we have
$$\frac{\frac{-f(t_1)}{t_1}
 -\frac{f(\Si(t_1))}{\Si(t_1)}}{\eta(t_1)}
 =f(t_1)\,\frac{-\frac{1}{t_1}+\frac{1}{\Si(t_1)}}{\eta(t_1)}
  =f(t_1).$$
Therefore, $\odd{-\frac{2 f(t_1)}{\eta(t_1)\,t_1}}=\odd{f(t_1)}$.
The function $D_1\cH_{g,n}$ differs from such a function by a
holomorphic summand that gives trivial contribution to the
transformation. This implies
$$\odd{-\frac{2}{\eta(t_1)\,t_1}D_1\cH_{g,n}}
  =\odd{D_1\cH_{g,n}}=D_1\cH_{g,n}.$$

We obtain finally the equation
\begin{equation}
D_1\cH_{g,n}=\odd{\frac1\eta\left(
\begin{aligned}
 &\sum_{j=2}^n
  \frac{t_1^2(1+t_j)}{t_1-t_j}D_1\cH_{g,n-1}(t_1,t_{L'\backslash\{j\}})\\
 &+D_1D_{n+1}\cH_{g-1,n+1}(t_1,t_{L'},t_{n+1})\bigm|_{t_{n+1}=t_1}\\
 &+\sum_{g_1+g_2=g}
  \sum_{A\sqcup B=L'}^{\rm stable}D_1\cH_{g_1,|A|+1}(t_1,t_A)
    D_1\cH_{g_2,|B|+1}(t_1,t_B)
\end{aligned}
\right)}
 \label{Ht}\end{equation}
where $L'=L\setminus\{1\}=\{2,\dots,n\}$.

In order to represent this equation in a more readable form, let
us apply $\prod_{k=2}^nD_k$ to both its sides and observe that the
expression inside the square brackets becomes algebraic with
respect to the functions $W_{g,k}$ defined by \eqref{Wset}.
Moreover, the first term on the right hand side can be formally
included into the last since we defined the contribution
of the unstable terms as in  Equations~\eqref{wunst1} and~\eqref{wunst2}.
With this notation, the result of the application
of~$\prod_{k=2}^nD_k$ to both sides of Equation~\eqref{Ht} takes the form
of the following recursive relation on $W_{g,n}$.

\begin{proposition}\label{th3}
The function $W_{g,n}$ defined by~\eqref{Wset}--\eqref{wunst2}
satisfies the recursive equation
$$W_{g,n}(t_1,t_{L'})=\odd{\frac1{\eta(t_1)}\tW(t_1,t_1;t_{L'})},$$
where $L'=\{2,\dots,n\}$, $t_{L'}=(t_2,\dots,t_n)$, and
\begin{align}  \label{tW}
 & \tW_{g,n}(u,v;t_{L'})=W_{g-1,n+1}(u,v,t_{L'})
 \\ \notag
 & +\sum_{g_1+g_2=g}
  \sum_{A\sqcup B=L'}W_{g_1,|A|+1}(u,t_A)W_{g_2,|B|+1}(v,t_B).
\end{align}
\end{proposition}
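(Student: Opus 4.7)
The strategy is to derive the recursion from the cut-and-join equation \eqref{cutjoint2} by exploiting two properties of $\cH_{g,n}$: its polynomiality in each $t_i$, established (without recourse to the ELSV formula) as Theorem~\ref{ThPol}, and the fact that the principal part of its pole at $P$ in each variable is $\sigma$-odd. The latter follows from Lemma~\ref{lem2} together with the expansion $\cH_{g,n} = \sum c_{k_1\dots k_n}\prod \xv_{k_i}(t_i)$ given by \eqref{hurwcoefexpr}.

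First I would apply the odd residueless projector $f(t_1)\mapsto \odd{2f/\eta(t_1)}$ to both sides of \eqref{cutjoint2}, acting in the variable $t_1$. By construction this projector annihilates any function whose pole at $P$ in $t_1$ has odd principal part and also kills the holomorphic part. Consequently: the term $(2g-2+n)\cH_{g,n}(t_L)$ on the left vanishes; every term $-\tfrac{1}{t_k}D_k\cH_{g,n}$ with $k\neq 1$ vanishes, being odd in $t_1$; on the right-hand side, every summand indexed by $k\neq 1$ vanishes for the same reason, since $D_k\cH$ is $\sigma$-odd in $t_1$ and the rational prefactors depending on $t_k,t_j$ do not involve $t_1$. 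The single surviving left-hand side term $-\tfrac{1}{t_1}D_1\cH_{g,n}$ returns $D_1\cH_{g,n}$ via the identity
$$\frac{-f(t_1)/t_1 - f(\Si(t_1))/\Si(t_1)}{\eta(t_1)} = f(t_1),$$
valid for $\sigma$-odd $f$, together with the observation that a holomorphic correction contributes nothing. Collecting the surviving $k=1$ terms on the right reproduces exactly the bracketed expression in \eqref{Ht}, proving that intermediate equation.

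Second I would apply $\prod_{k=2}^n D_k$ to both sides of \eqref{Ht}. Each factor $D_k$ commutes through the projector $\odd{\,\cdot\,/\eta(t_1)}$, since the latter only touches $t_1$. On the right-hand side this operation converts every $D_1\cH_{g_i,|A_i|+1}(t_1,t_{A_i})$ into the corresponding $W_{g_i,|A_i|+1}(t_1,t_{A_i})$ defined by \eqref{Wset}, and analogously for the term coming from $D_1D_{n+1}\cH_{g-1,n+1}$. The unstable prefactors $-1/t_k$ and $t_k^2(1+t_j)/(t_k-t_j)$ that were extracted in the derivation of \eqref{cutjoint2} are precisely the ones produced from $W_{0,1}$ and $W_{0,2}$ as defined by \eqref{wunst1}--\eqref{wunst2}, so they can be absorbed back into the sum, turning it into the formal expression $\tW_{g,n}(t_1,t_1;t_{L'})$ of \eqref{tW}. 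On the left-hand side, $\prod_{k=1}^n D_k\cH_{g,n} = W_{g,n}(t_1,t_{L'})$, and this yields the stated recursion.

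The main bookkeeping obstacle is the last one: one must check that the $(0,1)$ and $(0,2)$ contributions excluded from the ``stable'' sum in \eqref{cutjoint2} are exactly reinstated when one replaces $D_1\cH_{0,1}$ and $D_1\cH_{0,2}$ by $W_{0,1}$ and $W_{0,2}$ following \eqref{wunst1}--\eqref{wunst2}, so that the formal expansion of $\tW$ coincides termwise with the explicit combination inside the brackets of \eqref{Ht}. Everything else is an application of the two structural inputs (polynomiality and $\sigma$-oddness) to a known functional equation, and the polynomial output of the recursion is automatic from the fact that $\odd{\cdot}$ produces a polynomial divisible by $t_1^2$.
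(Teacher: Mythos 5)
Your proposal is correct and follows essentially the same route as the paper: apply the projector $f\mapsto\odd{2f/\eta}$ in $t_1$ to the cut-and-join equation~\eqref{cutjoint2}, use polynomiality and the $\si$-oddness of the principal part at $P$ to kill all but the $k=1$ terms and recover $D_1\cH_{g,n}$ on the left, then apply $\prod_{k=2}^nD_k$ and absorb the unstable prefactors into $\tW_{g,n}$ via~\eqref{wunst1}--\eqref{wunst2}. The only cosmetic remark is that the $j=1$, $k\neq1$ terms of the `cut' sum die because they are holomorphic at $P$ in $t_1$ (not because they are odd), which your projector handles anyway.
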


\begin{remark}\label{rem4} If $f(t_1)$ is a meromorphic function whose pole at~$P$
has odd principal part then for any other function $g$ we have
$$\odd{\frac{f(t_1)g(t_1)}{\eta(t_1)}}+
  \odd{\frac{f(t_1)g(\Si(t_1))}{\eta(t_1)}}=
 \odd{f(t_1)\frac{g(t_1)+g(\Si(t_1))}{\eta(t_1)}}=0$$
since $(g(t_1)+g(\Si(t_1)))/\eta(t_1)$ is odd. Therefore, $W_{g,n}$
can equivalently be obtained by the either of the following
relations
\begin{align*}
W_{g,n}(t_1,t_{L'}) & =-\odd{\frac1{\eta(t_1)}\tW_{g,n}(t_1,\Si(t_1);t_{L'})}
\\ 
   &  =\odd{\frac1{\eta(t_1)}\tW_{g,n}(\Si(t_1),\Si(t_1);t_{L'})}.
\end{align*}
\end{remark}

\subsection{Residual formalism}

The coefficient $f_k$ of the meromorphic function
$f(t_1)=\sum_{-\infty<i\le N} f_i t_1^i$ can be extracted by
taking the residue
  $$f_k=\Res_{z=0}\Bigl(f\bigl(\tfrac1z\bigr)z^{k-1}\,dz\Bigr).$$
It follows that the whole residueless principal part of the pole
of~$f$ is given by
\begin{equation}
  \sum_{k=2}^N f_kt_1^k
  =\Res_{z=0}\Bigl(f\bigl(\tfrac1z\bigr)\sum_{k=2}^\infty t_1^kz^{k-1}\,dz\Bigr)
 =\Res_{z=0}\Bigl(f\bigl(\tfrac1z\bigr)\frac{t_1^2 z}{1-t_1 z}\,dz\Bigr).
 \label{res1}\end{equation}
Similarly, for the function $\of(t_1)=f(\Si(t_1))=\sum_{-\infty<i\le
N} \of_i t_1^i$ we get
\begin{align} \label{res2}
\sum_{k=2}^N \of_kt_1^k
& =\Res_{z=0}\Bigl(f\bigl(\tfrac1{s(z)}\bigr)\frac{t_1^2 z}{1-t_1 z}\,dz\Bigr)
\\ \notag &
 =\Res_{z=0}\Bigl(f\bigl(\tfrac1{z}\bigr)\frac{t_1^2 \si(z)}{1-t_1 \si(z)}
  \frac{z}{1+z}\frac{1+\si(z)}{\si(z)}\,dz\Bigr).
\end{align}
  We used here the equality
  $$\frac{z\,dz}{1+z}=\frac{\si(z)\,d\si(z)}{1+\si(z)}$$
that follows from Equation~\eqref{s(z)}.

Combining~\eqref{res1} and~\eqref{res2} we obtain a residual
formula for the odd residueless principal part of the pole of a
function:
$$\odd{f(t_1)/\eta(t_1)}=-\Res_{z=0}\bigl(K(z,t_1)\,f(1/z)\bigr)$$
where
\begin{align}  \label{K}
K(z,t_1)&=\frac{1}{2\eta(1/z)}\Bigl(
 \frac{t_1^2z}{1-t_1 z}-\frac{t_1^2\si(z)}{1-t_1\si(z)}\frac{z}{1+z}
   \frac{1+\si(z)}{\si(z)}\Bigr)\,dz \\ \notag
   &=\frac{t_1^2(1+t_1)}{2(1-z\,t_1)(1-\si(z)\,t_1)}\;\frac{z\,dz}{z+1}.
\end{align}
 This, substituted into the recursive formulas
 of Proposition~\ref{th3} and Remark~\ref{rem4}, directly gives Theorem~\ref{th1}.



\section{Spectral curve topological recursion/Givental correspondence revisited} \label{sec:CEO-Givental}
In this section we review the correspondence between spectral curve topological recursion and Givental theory established in \cite{DOSS12}. 
We use it in the next section to prove the equivalence between the Bouchard-Mari\~no conjecture and the ELSV formula. 
This way we obtain a new proof of the ELSV formula, using the new independent proof of the Bouchard-Mari\~no conjecture from the previous section.

\subsection{Givental formula}

Let $H$ be a Frobenius algebra, that is, a finite-dimensional commutative associative algebra over~$\C$ with a unit denoted by~$\1\in H$, equipped with a linear function $\ell:H\to\C$ such that the symmetric bilinear form given by $\langle a,b\rangle=\ell(a\,b)$ is non-degenerate. A typical example is the (even part of the) cohomology ring of a complex compact manifold. Its dimension will be denoted by~$N=\dim H$. Fix a basis $e_1,\dots,e_N$ in~$H$.

Consider also an element of the \emph{Givental upper triangular twisted loop group}, that is, a formal series of the form
$$R(z)=1+\sum_{k=1}^\infty R_kz^k,\quad R_k\in\End(H),$$
satisfying
$$R(z)\,R^*(-z)=1.$$
In terms of the Lie algebra element $r(z)=\log(R(z))$, $R(z)=\exp{r(z)}$, the last relation can be equivalently rewritten as $r(z)+r^*(-z)=0$.

To this data (a Frobenius algebra and an element $R$ of the upper triangular group) Givental associates a \emph{formal Gromov-Witten potential} $F$, a formal series in an infinite number of variables $t_{k\nu}$, $k=0,1,2,\dots$, $\nu=1,2,\dots,N$, and one extra variable $\hbar$, defined by the formula
\begin{equation}\label{Gact}
e^{\frac1\hbar F}=\widehat R\,e^{\frac{1}{\hbar} F^\rmtop},
  \quad\widehat R=e^{\widehat r},
\end{equation}
where $F^\rmtop$ is the potential of the topological field theory associated with the Frobenius algebra~$H$, and $\widehat r$ is a second-order differential operator obtained from $r(z)$ by a procedure of `quantization of quadratic Hamiltonians', see details in~\cite{G}.

A choice of basis in~$H$ is not essential. A change of the basis leads to a linear change of variables in the potential of the form $t_{k\nu}\longrightarrow \sum_{\mu=1}^N\Psi^\mu_\nu t_{k\mu}$ where $\Psi$ is the matrix of the change of basis. 
In other words, we can treat $F$ as a formal function on $H\otimes H\otimes\dots$.

It was observed in~\cite{FSZ,Kaz07} that the potential $F$ constructed this way is, in fact, a descendant potential of a certain 
cohomological field theory. Moreover, it is proved in~\cite{T} that the descendant potential of any semi-simple cohomological field theory can be represented in such form.

\subsection{Spectral curve topological recursion}

Spectral curve topological recursion is a formal procedure leading to a family of certain differentials $w_{g,n}$ associated with a plane complex curve. They were introduced originally for particular curves in relation to matrix models in mathematical physics \cite{AMM06a, AMM06,CE05, CE06, CEO},  then the procedure was formalized for arbitrary abstract curves \cite{ EO, OrT}.

Let $C\subset\C^2$ be a smooth complex curve on the plane with coordinates $x,y$. Let $a_1,\dots,a_N\in C$ be the critical points of the coordinate function~$x$. The construction of the differentials $w_{g,n}$ requires the study of the curve in a neighborhood of these points, therefore, it is sufficient to assume that instead of~$C$ we have a union of~$N$ small discs centered at the points~$a_i$, $i=1,\dots,N$, or even the union of formal neighborhoods of these points. Respectively, by a function or differential form (holomorphic or meromorphic) on~$C$ we mean a collection of germs of functions or differential forms at the points~$a_i$ or even a collection of formal Laurent series at these points.

Assume that each point~$a_i$ is a Morse critical point of the function~$x$, that is, $x$ is a ramified covering with a ramification of order~$2$ at~$a_i$. Let~$\si$ be the holomorphic involution on~$C$ interchanging the branches of the function~$x$ near~$a_i$. In order to simplify notations, for any function or differential form $\a$ we denote $\overline\a=\si^*\a$. With this notation the involution is given by $\s\colon (x,y)\mapsto(x,\overline y)$. Remark that this bar sign has nothing to do with the complex conjugation in the present context. Remark also that the form $\overline\a$ is defined in a neighborhood of the point~$a_i$ only, even if the form~$\a$ is globally defined.

On top of that, assume that we are given a \emph{$2$-point differential} $B(z_1,z_2)$ (referred to as \emph{Bergman kernel} in some papers), that is, a meromorphic symmetric $2$-differential on~$C\times C$ representable near $a_i\times a_j\in C\times C$ in the form
$$B(z_1,z_2)=\delta_{i,j}\frac{dz_1^{(i)}dz_2^{(j)}}{(z_1^{(i)}-z_2^{(j)})^2}
 +B^{(ij)}_{\reg}(z_1^{(i)},z_2^{(j)})$$
where $z^{(i)}$ is a local coordinate on~$C$ near $a_i$ and where~$B^{(ij)}_{\reg}(z_1^{(i)},z_2^{(j)})$ is holomorphic at $a_i\times a_j$.

The spectral curve $n$-point functions $w_{g,n}$, $g\ge0$, $n\ge 1$, are meromorphic $n$-differentials on~$C^{\times n}$ defined inductively by the following formulas:
$$w_{0,1}(z)=0,\qquad w_{0,2}(z_1,z_2)=B(z_1,z_2),$$
and for $2g-2+n>0$,
\begin{equation}\label{eqB1}
w_{g,n}(z,z_2,\dots,z_n)=-\sum_{i=1}^N\res_{z'=a_i}\left(
\frac{\widetilde w_{g,n}(z',\bar z',z_2,\dots,z_n)}{2\,\mu(z')}
 \int_{z'}^{\bar z'} B(z,\cdot)\right),
 \end{equation}
where $\mu$ is the $1$-form $\mu:=y\,dx-\bar y\,dx$ defined in a neighborhood of the union of points~$a_i$, and where
\begin{equation}\label{eqEO}
\widetilde w_{g,n}(z',z'',z_K)=
  w_{g-1,n+1}(z',z'',z_K)+
\sum_{\substack{g_1+g_2=g\\I\sqcup J=K}}\!\!
 w_{g_1,|I|+1}(z',z_I)\;w_{g_2,|J|+1}(z'',z_J).
\end{equation}
We used here notation $K=\{2,\dots,n\}$, and  $u_I=(u_{i_1},\dots,u_{i_{|I|}})$ for any subset $I=\{i_1,\dots,i_{|I|}\}\subset K$.

\begin{remark}
We collect here several important remarks clarifying the meaning of all these formulas.

\begin{enumerate}
\item Consider the following operator $\a\mapsto P\a$ acting on the space of meromorphic $1$-forms,
$$(P\a)(z)=
\sum_{i=1}^N\res_{z'=a_i}\left(
\frac{\a(z')}{2}
 \int_{z'}^{\bar z'} B(z,\cdot)\right).$$
Denote by $L$ the image of this operator. Then \emph{the operator~$P$ is the projection to the subspace~$L$}, that is, it is identical on~$L$. The kernel of~$P$ is generated by holomorphic and by even (in the sense of local automorphism $\sigma$) meromorphic $1$-forms.

\item It follows that the $1$-form in~$z$ on the right hand side of~\eqref{eqB1} belongs to~$L$. In other words, the invariants $w_{g,n}$ can be regarded as tensors $w_{g,n}\in L^{\otimes n}$ (for $(g,n)\ne(0,2)$). These tensors are \emph{symmetric} and \emph{polynomial}. The last property means that $w_{g,n}$ belongs to the corresponding tensor product space itself, not just to its completion.

\item The data contained in the collection of invariants $w_{g,n}$ can be collected in a single \emph{potential} $F=\sum \hbar^g F_g$ such that the symmetric tensor $w_{g,n}$ is identified with the $n$th homogeneous term of the Taylor expansion of $F_g$,
$$w_{g,n}=\sum_{\a_1,\dots,\a_n}\pd{^n F_g}{t_{\a_1}\dots \d t_{\a_n}}\Bigm|_{t=0} d\xi_{\a_1}\otimes\dots\otimes d\xi_{\a_n}.$$
Here $\{d\xi_a\}_{\a\in\cA}$ is some chosen basis in~$L$, and $t=\{t_\a\}_{\a\in\cA}$ is the set of formal variables labeled by the same set of indices. The coordinate expression of the potential $F$ depends on a choice of the basis in~$L$. A different choice of the basis leads to the corresponding linear change of coordinates in~$F$. Otherwise, $F$ can be regarded as a formal function on the infinite dimensional space~$L^*$; with this treatment of the potential, it is invariantly defined and independent of any basis.

\item
The dual space~$V=L^*$ can be identified with the space of \emph{odd holomorphic}~$1$-forms. The pairing is given by
$$(\a,\b)=\sum_{\nu=1}^N\res_{z=a_\nu}(\a\;\int\!\!\b),\quad\a\in L.\quad\b\in V.$$
If $\{d\xi_\a\}_{\a\in\cA}$ is any basis in~$L$ and $\{d\xi^\a\}_{\a\in\cA}$ is the dual basis in~$V=L^*$, then there is an asymptotic expansion
$$\frac12(B(z,w)-B(z,\overline w))=\sum_{\a\in\cA}d\xi_\a(z)d\xi^\a(w).$$
This expansion takes place as $w\to a_i$, $|w-w(a_i)|\ll|z-z(a_i)|$.

\item
It follows, in particular, that the subspace $L$ is spanned by the coefficients of the Taylor expansion of the antisymmetrized Bergman kernel $\frac12(B(z,w)-B(z,\overline w))$ with respect to the second argument~$w$ at the points $a_i$.
\end{enumerate}

\end{remark}


\subsection{Givental action as spectral curve topological recursion}

Here we formulate in a refined way the result of~\cite{DOSS12} in the case $N=1$.

Let $C$ be a curve on the $(x,y)$-plane as above. Consider the following operator acting in the space of meromorphic $1$-forms,
$$\Do:\a\mapsto d\Bigl(\frac\a{dx}\Bigr).$$
This operator commutes with the action of the involution~$\s$, $\Do\overline\a=\overline{\Do\a}$. Set
$$
d\xi^k:=\Do^{-k}dy,\quad k=0,1,2,\dots.
$$
The forms $d\xi^k$ are holomorphic in a neighborhood of the point~$a_1$. There is an ambiguity in the choice of integration constants appearing in the inversion of~$D$. Different choices of these constants lead to forms that differ by a holomorphic and \emph{even} (with respect to the involution $\s$) summand. It follows that the odd parts of these forms
$$\frac12\bigl(d\xi^k-d\overline\xi^k\bigr),\quad k=0,1,2,\dots$$
are independent of any choice. Moreover, these odd forms form a basis in the space of odd holomorphic forms. Let us take the antisymmetrized Bergman kernel $\frac12\bigl(B(z,w)-B(z,\overline w)\bigr)$, develop it over the obtained basis, and denote by $d\xi_k$ the coefficients of this expansion:
\begin{equation}\label{Bxiexp}
\frac12\bigl(B(z,w)-B(z,\overline w)\bigr)
 =\sum_{k=0}^\infty d\xi_k(z)\frac{d\xi^k(w)-d\overline\xi^k(w)}2.
 \end{equation}
This asymptotic expansion takes place as $w\to 0$, $|w|\ll|z|$ where $z$ is a local holomorphic coordinate on~$C$ near the point~$a_1$. The form $d\xi_k$ defined by this expansion is meromorphic with a pole of order $2k+1$ at $z=0$.

\begin{definition}
The Bergman kernel is said to be \emph{compatible} with the operator~$\Do$ if the introduced meromorphic forms $d\xi_k$ are given explicitly by $d\xi_k=(-1)^{k+1}\Do^{k+1}d\xi^0$.
\end{definition}

The following criterion simplifies the verification of the compatibility condition.

\begin{lemma}
\label{lcomp}
Assume that the Bergman kernel satisfies the identity
$$(\Do_z+\Do_w)B(z,w)=-\Do_zd\xi^0(z)\;\Do_wd\xi^0(w).$$
Then it is compatible with~$\Do$.
\end{lemma}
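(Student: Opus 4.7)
The plan is to extract from the hypothesis a first-order recursion $d\xi_{k+1}=-\Do\,d\xi_k$ for the coefficients of \eqref{Bxiexp}, and iterate it from the base identity $d\xi_0=-\Do\,d\xi^0$. To do this, I would first antisymmetrize the hypothesis in the second variable. Setting $B^{\mathrm{odd}}(z,w):=\tfrac12(B(z,w)-B(z,\overline w))$ and $F:=\Do\,d\xi^0$, and using that $\Do$ commutes with the local involution~$\si$, the hypothesis becomes
\[
(\Do_z+\Do_w)B^{\mathrm{odd}}(z,w)=-\Do_z\,d\xi^0(z)\cdot F^{\mathrm{odd}}(w),
\]
where $F^{\mathrm{odd}}$ denotes the $\si_w$-odd part of~$F$. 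A key structural point is that $F^{\mathrm{odd}}$ is meromorphic with a genuine pole at $w=a_1$ (the operator $\Do$ involves division by $dx$, which vanishes at the ramification), whereas the basis forms $h_k(w):=\tfrac12(d\xi^k(w)-d\overline\xi^k(w))$ in \eqref{Bxiexp} are all holomorphic at~$a_1$.

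Next I apply $\Do_w$ termwise to \eqref{Bxiexp}. From $\Do\,d\xi^k=d\xi^{k-1}$ for $k\ge1$ and $\Do\,d\xi^0=F$ it follows that $\Do_w h_k=h_{k-1}$ for $k\ge1$ and $\Do_w h_0=F^{\mathrm{odd}}$, hence
\[
\Do_w B^{\mathrm{odd}}(z,w)=d\xi_0(z)\,F^{\mathrm{odd}}(w)+\sum_{j\ge 0} d\xi_{j+1}(z)\,h_j(w).
\]
On the other hand, the antisymmetrized PDE rewrites $\Do_w B^{\mathrm{odd}}=-\Do_z B^{\mathrm{odd}}-\Do\,d\xi^0\cdot F^{\mathrm{odd}}$, and expanding $\Do_z B^{\mathrm{odd}}$ via \eqref{Bxiexp} gives
\[
\Do_w B^{\mathrm{odd}}(z,w)=-\sum_{k\ge 0}(\Do\,d\xi_k)(z)\,h_k(w)-\Do\,d\xi^0(z)\,F^{\mathrm{odd}}(w).
\]

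Equating these two expressions and collecting the $w$-dependence yields
\[
[d\xi_0(z)+\Do\,d\xi^0(z)]\,F^{\mathrm{odd}}(w)+\sum_{j\ge 0}[d\xi_{j+1}(z)+\Do\,d\xi_j(z)]\,h_j(w)=0.
\]
The principal part of $F^{\mathrm{odd}}$ at $w=a_1$ is linearly independent from the holomorphic basis $\{h_j\}$, and the $h_j$'s themselves form a basis of odd holomorphic $1$-forms, so each bracketed coefficient must vanish. This gives $d\xi_0=-\Do\,d\xi^0$ together with the recursion $d\xi_{j+1}=-\Do\,d\xi_j$ for every $j\ge 0$; iterating produces $d\xi_k=(-1)^{k+1}\Do^{k+1}\,d\xi^0$, which is the compatibility condition.

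The only delicate point is the coefficient matching: one needs that termwise differentiation of \eqref{Bxiexp} by~$\Do_w$ is legitimate (which is fine since the expansion converges as a Taylor series in $w$ in the regime $|w|\ll|z|$ where $B^{\mathrm{odd}}$ is holomorphic in~$w$) and that the polar part of $F^{\mathrm{odd}}$ does not lie in the span of the $h_k$'s (immediate from the local Laurent expansion, since $F^{\mathrm{odd}}$ has a pole of order two at $w=a_1$ while every $h_k$ is holomorphic there). Once these two facts are in place, the induction runs automatically.
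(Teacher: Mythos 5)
Your proposal is correct and follows essentially the same route as the paper's proof: antisymmetrize the hypothesis in $w$, apply $\Do_z+\Do_w$ termwise to the expansion \eqref{Bxiexp} using $\Do\, d\xi^k=d\xi^{k-1}$, and conclude $d\xi_0=-\Do\, d\xi^0$, $d\xi_{k+1}=-\Do\, d\xi_k$ by matching coefficients. The only difference is that you spell out the linear-independence argument (the polar part of $\Do_w\tfrac12(d\xi^0-d\overline\xi^0)$ versus the holomorphic basis forms) that the paper leaves implicit.
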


\begin{proof} Applying the expansion~\eqref{Bxiexp} we get
\begin{align*}
0 & =(\Do_z+\Do_w)\frac{B(z,w)-B(z,\overline w)}2+
 \Do_zd\xi^0(z)\;\Do_w\frac{d\xi^0(w)-d\overline\xi^0(w)}2
 \\ &
 =\sum_{k=0}^\infty (\Do_zd\xi_k(z)+d\xi_{k+1}(z))\frac{d\xi^k(w)-d\overline\xi^k(w)}2
 \\ & \phantom{ = }\ 
 +(\Do_zd\xi^0(z)+d\xi_0(z))\,\Do_w\frac{d\xi^0(w)-d\overline\xi^0(w)}2.
\end{align*}
This equality is equivalent to the system of equations $d\xi_0=-\Do d\xi^0$, $d\xi_{k+1}=-\Do d\xi_k$, that is, $d\xi_k=(-1)^{k+1}\Do^{k+1}d\xi^0$, as required.
\end{proof}

Now, assume that the Bergman kernel is compatible with~$\Do$. Introduce the local coordinate~$s$ on the curve near the point~$a_1$ from the relation $dx=s\,ds$, that is,
$$s=\sqrt{2(x-x(a_1))}.$$
This coordinate is defined up to a sign, and the involution in this coordinate is given simply by $\overline s=-s$. Consider the expansion of the odd part of the form $dy$ in this coordinate,
\begin{equation}
\label{ys}
\frac12(dy-d\overline y)=ds+\sum_{k=1}^\infty R_k\frac{s^{2k}\,ds}{(2k-1)!!}.
\end{equation}

We can now formulate the main result of~\cite{DOSS12} for the case of $N=1$.

\begin{theorem}
\label{DBOSSth}
If the Bergman kernel is compatible with the operator~$\Do$, then the spectral curve $n$-point functions are the $n$-point correlator functions of a certain formal GW potential $F(t_0,t_1,\dots)=\sum \hbar^gF_g$,
$$w_{g,n}=\sum_{k_1,\dots,k_n}\pd{^n F_g}{t_{k_1}\dots \D t_{k_n}}\Bigm|_{t=0} d\xi_{k_1}\otimes\dots\otimes d\xi_{k_n}.$$
Moreover, this GW potential is given by the Givental formula~\eqref{Gact} with the Witten-Kontsevich potential for $F^{\rmtop}$ and with the element $R(z)=1+R_1z+R_2z^2+\dots$ of the upper triangular group whose components $R_k$ are determined by the expansion~\eqref{ys}.
\end{theorem}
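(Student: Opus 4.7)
The plan is to exhibit both sides---the Eynard-Orantin invariants $w_{g,n}$ and the correlators of the Givental potential---as sums over decorated stable graphs, and to match them weight by weight. Unfolding $\widehat R\,e^{F^{\rmtop}/\hbar}$ produces the standard Givental graph sum: to each stable graph $\Gamma$ one associates a product of vertex factors $\int_{\overline M_{g_v,n_v}} \prod \psi^{a_i}$ (i.e.\ Witten-Kontsevich intersection numbers), a factor of $R$-matrix entries on every flag, and an edge factor built from the bivector obtained from $R(z)R^*(-w)=1$ by the standard quadratic-Hamiltonian recipe. In the $N=1$ case each such edge factor is a scalar built polynomially from the coefficients $R_k$ of~\eqref{ys}.

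On the topological recursion side, I would iterate~\eqref{eqB1}--\eqref{eqEO}. Each application of the recursion replaces a leg by a residue at the ramification point $a_1$; using the compatibility assumption together with Lemma~\ref{lcomp}, this residue can be re-expanded in the basis $\{d\xi_k\}$ of odd meromorphic forms. Iterating, one arrives at a sum indexed by the same combinatorial data (stable graphs) as on the Givental side, with the vertex coming from the local Airy model near $a_1$, the edges from the antisymmetrized Bergman kernel, and the legs from the expansion basis $d\xi_k$. This is the graph reformulation of topological recursion going back to Eynard.

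The matching then splits into three checks. For \emph{legs}, expanding $w_{g,n}$ in the basis $d\xi_k$ directly identifies the $k$-th element of the basis with the descendant $\tau_k$, which is what the statement of the theorem asserts. For \emph{vertices}, the local coordinate $s$ defined by $dx = s\,ds$ is the Airy coordinate, and the odd expansion~\eqref{ys} is by definition the $R$-matrix of the Givental quantization; a local calculation then shows that the single-vertex output of the recursion is exactly the $R$-dressed Witten-Kontsevich correlator. For \emph{edges}, the compatibility $d\xi_k = (-1)^{k+1}\Do^{k+1}d\xi^0$ reduces the edge-identification to an algebraic identity in the operator $\Do$ acting on the forms $d\xi^0$ and $dy$.

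The principal technical obstacle is the edge-matching step. One must show that the expansion~\eqref{Bxiexp} of $\tfrac{1}{2}(B(z,w)-B(z,\overline w))$, once the dual pairing between the forms $\{d\xi_k\}$ (polar at $a_1$) and $\{d\xi^k\}$ (regular at $a_1$) is used to transport everything to $d\xi_k \otimes d\xi_l$, reproduces exactly the Givental bivector propagator built from the $R_k$ of~\eqref{ys}. This is a careful bookkeeping of the $(2k-1)!!$ normalizations, of the signs introduced by $\sigma$, and of the interplay between $R$ and $R^{*}$. Once this propagator identity is in place, the vertex and leg checks are relatively formal, and the theorem follows by the graph-by-graph coincidence of the two sums.
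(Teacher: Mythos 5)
The paper does not prove this theorem at all: it is stated verbatim as the main result of the cited reference~\cite{DOSS12} (specialized to $N=1$), so there is no in-paper proof to compare against. Your outline --- unfolding $\widehat R\,e^{F^{\rmtop}/\hbar}$ into the Givental stable-graph sum, rewriting the iterated recursion \eqref{eqB1}--\eqref{eqEO} as Eynard's graph sum with Airy-model vertices, and matching legs, vertices and edges, with the propagator identity between the expansion \eqref{Bxiexp} of the antisymmetrized Bergman kernel and the $R$-matrix bivector as the crux --- is essentially the strategy actually used in~\cite{DOSS12}. Note only that what you have written is a correct plan rather than a proof: the edge-matching step you yourself single out (the bookkeeping of $(2k-1)!!$ normalizations, signs under $\sigma$, and the $R$ versus $R^*$ interplay, using the compatibility relation $d\xi_k=(-1)^{k+1}\Do^{k+1}d\xi^0$) is precisely where the substantive work of that reference lies, and it is left unexecuted here.
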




\section{New proof of the ELSV formula}\label{sec:BM-ELSV}
In the present section we prove the equivalence of the Bouchard-Mari\~no formula and the ELSV formula with the help of the Givental-topological recursion correspondence reviewed in the previous section. Note that this equivalence was already proved by Eynard in \cite{Eyn11}, see also~\cite{SSZ13}. 

From this equivalence, using our new proof of the Bouchard-Mari\~no conjecture (Theorem \ref{th1}), we obtain a new proof of the ELSV formula.

\subsection{Hodge class}
The total Hodge class $\L_g=1-\l_1+\dots+(-1)^g\l_g\in H^*(\mathcal{M}_{g,n})$ provides the simplest non-trivial example of a cohomological field theory (of dimension $N=1$). It follows that its potential, the generating function for Hodge integrals,
$$
F(\hbar,t_0,t_1,\dots)=\sum_{g,n}\frac{\hbar^g}{n!}\sum_{k_1,\dots,k_n}\;
\int\limits_{\mathcal{M}_{g,n}}\!\!\!\L_g\,\psi_1^{k_1}\dots\psi_n^{k_n}~
 t_{k_1}\dots t_{k_n}
$$
is a formal GW potential. Indeed, Mumford's formula~\cite{Mu} for the Chern characters of the Hodge bundle rewritten in terms of intersection numbers has exactly the form~\eqref{Gact} with the Witten-Kontsevich potential for the series $F^\rmtop$ and the following element of the upper triangular group
\begin{equation}\label{RHodge}
R(z)=\exp\left({\sum_{n=1}^\infty\frac{B_{2n}}{2n\,(2n-1)}z^{2n-1}}\right)
 =1+\frac{1}{12}z+\frac{1}{288}z^2-\frac{139}{51840}z^3+\dots,
\end{equation}
where $B_{n}$ is the $n$th Bernoulli number. The operator $\widehat R=\exp\left({\sum_{n=1}^\infty\frac{B_{2k}}{2n\,(2n-1)}\widehat{z^{2n-1}}}\right)$ corresponding to this element acts by
$$\widehat{z^{2n-1}}=-\pd{}{t_{2n}}+\sum_{i=0}^\infty t_i\pd{}{t_{i+2n-1}}
 -\frac12\sum_{i+j=2n-2}(-1)^i\pd{^2}{t_i\D t_j}.
$$
In the definition of this operator, we use a convention which differs by the sign from that of the original paper~\cite{G}.

\subsection{BM-ELSV equivalence}
Consider the Lambert curve \eqref{lam}
$$\lx=\ly-\log(1+\ly),\qquad d\lx=\frac{\ly\,d\ly}{1+\ly},$$
which is given here in logarithmic coordinates
\begin{align*}
\lx &= -1-\log x ,\\
\ly &= -1+y .
\end{align*}

For this curve, the standard Bergman kernel $B(\ly_1,\ly_2)=\frac{d\ly_1d\ly_2}{(\ly_1-\ly_2)^2}$ is compatible with the operator $\Do$. Indeed, we have
\begin{align*}
(\Do_{\ly_1}+\Do_{\ly_2})\frac{d\ly_1d\ly_2}{(\ly_1-\ly_2)^2}& =
 d_{\ly_1}\frac{(1+\ly_1)\,d\ly_2}{\ly_1(\ly_1-\ly_2)^2}+
 d_{\ly_2}\frac{(1+\ly_2)\,d\ly_1}{\ly_2(\ly_1-\ly_2)^2} \\ &
 =-\frac{d\ly_1d\ly_2}{\ly_1^2\ly_2^2} \\ &
 =-\Do_{\ly_1}d\ly_1\;\Do_{\ly_2}d\ly_2.
 \end{align*}
Therefore, by Lemma \ref{lcomp} and Theorem \ref{DBOSSth}, the spectral curve $n$-point functions in this case are the correlation functions of a certain formal GW potential. Moreover, this potential is obtained from the Kontsevich-Witten potential by the action of the element $R(z)=1+\sum R_kz^k$ of the Givental group whose coefficients are determined by the expansion
$$\frac{d}{ds}\frac{\ly(s)-\ly(-s)}2=
1+\sum_{k=1}^\infty R_k\frac{s^{2k}}{(2k-1)!!},$$
where the function $\ly(s)$ is given by the implicit equation
$$s=\sqrt{2\,(\ly-\log(1+\ly))}.$$
It is proved in~\cite{BMex} that these coefficients are the same as those given by the expansion~\eqref{RHodge}.

This means that for our spectral curve we have
\begin{align}
\label{equivf}
w_{g,n}&=\sum_{k_1,\dots,k_n}\pd{^n F_g}{t_{k_1}\dots \D t_{k_n}}\Bigm|_{t=0} (d\xi_{k_1})_1\dots (d\xi_{k_n})_n \\ \nonumber
&=\sum_{k_1,\dots,k_n}
\br{\int_{\overline{\mathcal{M}}_{g,n}} \Lambda_g \psi_1^{k_1}\dots \psi_n^{k_n}}
\prod_{i=1}^n\sum_{\mu_i=1}^{\infty}
\dfrac{\mu_i^{\mu_i+k_i+1}}{\mu_i!}x_i^{\mu_i-1}dx_i\\ \nonumber
&=\sum_{\mu_1,\dots,\mu_n}\br{\int_{\overline{\mathcal{M}}_{g,n}}
\dfrac{\Lambda_g}{\prod_{i=1}^{n}(1-\mu_i\psi_i)}}
\prod_{i=1}^{n}\dfrac{\mu_i^{\mu_i+1}}{\mu_i!}x_i^{\mu_i-1}dx_i .
\end{align}
Here we used the fact that in our case
\begin{align*}
d\xi_k &= (-1)^{k+1}\Do^{k+1}d\ly = d\br{\br{x\dfrac{d}{dx}}^{k+1} y}\\ &= d\br{\br{x\dfrac{d}{dx}}^{k+1} \sum_{\mu=1}^{\infty}\dfrac{\mu^{\mu-1}}{\mu!}x^{\mu}} = 
\sum_{\mu=1}^{\infty}\dfrac{\mu^{\mu+k+1}}{\mu!}x^{\mu-1}dx .
\end{align*}

Note that the Bouchard-Mari\~no conjecture may be written as
\begin{equation}
\label{bmdifs}
w_{g,n} = \sum_{\mu_1,\dots,\mu_n} \dfrac{\ch_{g;\mu_1,\dots,\mu_n}}{\bn(g,\mu)!}
     \;\mu_1\dots\mu_n\,x_1^{\mu_1-1}\dots x_n^{\mu_n-1}dx_1\dots dx_n,
\end{equation}
while the ELSV formula states that
\begin{equation}
\ch_{g;\mu_1,\dots,\mu_n}=\bn(g,\mu)!\br{\int_{\overline{\mathcal{M}}_{g,n}}
\dfrac{\Lambda_g}{\prod_{i=1}^{n}(1-\mu_i\psi_i)}}
\prod_{i=1}^{n}\dfrac{\mu_i^{\mu_i}}{\mu_i!} .
\end{equation}

We immediately see that formula \eqref{equivf} directly implies the following
\begin{theorem} The Bouchard-Mari\~no conjecture and the ELSV formula are equivalent.
\end{theorem}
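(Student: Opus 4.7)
The plan is to apply the spectral curve topological recursion / Givental correspondence (Theorem~\ref{DBOSSth}) to the Lambert curve $\lx = \ly - \log(1+\ly)$ equipped with the standard Bergman kernel $B(\ly_1,\ly_2) = d\ly_1\,d\ly_2/(\ly_1-\ly_2)^2$, and then match the resulting formula for the $n$-point functions $w_{g,n}$ with the two sides of the claimed equivalence.

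First I would check the hypotheses of Theorem~\ref{DBOSSth}. By Lemma~\ref{lcomp}, compatibility of the Bergman kernel with the operator $\Do = d \circ (dx)^{-1}$ reduces to the single identity
\[
(\Do_{\ly_1}+\Do_{\ly_2})\,B(\ly_1,\ly_2) = -\Do_{\ly_1}d\ly_1 \cdot \Do_{\ly_2}d\ly_2,
\]
which is a short direct computation on the Lambert curve. Granted this, Theorem~\ref{DBOSSth} asserts that $w_{g,n}$ is the correlator of a formal GW potential obtained from the Witten--Kontsevich tau function by the Givental action of an upper triangular $R(z) = 1 + \sum R_k z^k$ whose coefficients are read off from the expansion of $\frac12(d\ly - d\overline{\ly})$ in the local coordinate $s = \sqrt{2(\lx - \lx(P))}$. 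I would then invoke the identification, already established in \cite{BMex}, of these $R_k$ with the Bernoulli coefficients \eqref{RHodge} appearing in Mumford's formula for the total Hodge class $\Lambda_g$. This is the step I expect to be the main technical obstacle, but since we may cite it, the proof will use it as a black box.

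Next I would identify the basis $\{d\xi_k\}$ explicitly. Because the Bergman kernel is compatible with $\Do$, the recursion $d\xi_k = (-1)^{k+1}\Do^{k+1} d\ly$ applies, and using the Lagrange inversion expansion $y = \sum_{\mu\ge 1} \mu^{\mu-1} x^\mu/\mu!$ this gives
\[
d\xi_k(x) = \sum_{\mu=1}^{\infty}\frac{\mu^{\mu+k+1}}{\mu!}\, x^{\mu-1}\, dx.
\]
Substituting this into the correlator formula from Theorem~\ref{DBOSSth} and recognising the resulting sum over $k_1,\dots,k_n$ as the Taylor expansion of $1/\prod_i (1-\mu_i\psi_i)$ against $\Lambda_g$ yields precisely the chain of equalities displayed in \eqref{equivf}.

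Finally, the conclusion is purely formal: the last expression in \eqref{equivf} equals $\sum_{\mu} \bigl(\text{ELSV integrand}\bigr) \prod_i \mu_i^{\mu_i+1}/\mu_i!\, x_i^{\mu_i-1}\, dx_i$, while the Bouchard--Mariño statement \eqref{bmdifs} equates $w_{g,n}$ to the analogous sum with $\ch_{g;\mu}/\bn(g,\mu)!$ in place of the Hodge integral times $\prod_i \mu_i^{\mu_i}/\mu_i!$. Since $\prod_i \mu_i^{\mu_i+1} = \prod_i \mu_i \cdot \prod_i \mu_i^{\mu_i}$, matching coefficients of $x_1^{\mu_1-1}\cdots x_n^{\mu_n-1}\, dx_1\cdots dx_n$ in the two expressions gives exactly the ELSV formula if and only if the Bouchard--Mariño conjecture holds, proving their equivalence. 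Combined with the independent proof of the Bouchard--Mariño conjecture in Section~\ref{secBM}, this yields a new proof of ELSV.
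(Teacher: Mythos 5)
Your proposal is correct and follows essentially the same route as the paper: verifying compatibility of the standard Bergman kernel on the Lambert curve via Lemma~\ref{lcomp}, invoking Theorem~\ref{DBOSSth} together with the identification from~\cite{BMex} of the $R$-matrix coefficients with the Bernoulli coefficients of~\eqref{RHodge}, computing $d\xi_k$ by Lagrange inversion, and matching coefficients between~\eqref{equivf} and~\eqref{bmdifs}. No gaps; this is the paper's own argument.
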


This means that we have a new proof of the ELSV formula, since we proved the Bouchard-Mari\~no conjecture independently in Section \ref{secBM}. Note that the Bouchard-Mari\~no conjecture as given in Theorem \ref{th1} is equivalent to formula \eqref{bmdifs}, if one takes into account the topological recursion formula for $w_{g,n}$, given by Equations~\eqref{eqB1} and~\eqref{eqEO}.

\end{document}